\documentclass[reqno,oneside,11pt]{amsart}

\usepackage[T1]{fontenc}
\usepackage{tikz}
\usetikzlibrary{plotmarks, shapes.geometric, backgrounds}
\usepackage{color,soul}
\usepackage{xcolor}
\usepackage[utf8]{inputenc}
\usepackage{lmodern}
\usepackage[english]{babel}
\usepackage{comment}
\usepackage[unicode, pdftex,
            pdfauthor={Wodson Mendson/João Paulo},
            pdftitle={Non-algebraicity of foliations via reduction modulo $2$},
            pdfsubject={foliations, algebraic geometry},
            pdfkeywords={foliations on surfaces, p-divisor},
            pdfproducer={Latex with hyperref},
            pdfcreator={Overleaf}]{hyperref}
\usepackage{amsmath}
\usepackage{amsfonts}
\usepackage{amssymb}
\usepackage{amsthm}
\usepackage{MnSymbol}
\usepackage{graphicx}
\usepackage{tabularx}

\newtheorem{theorem}{Theorem}

\newtheorem{thm}{Theorem}[section]
\newtheorem{lemma}[thm]{Lemma}
\newtheorem{cor}[thm]{Corollary}
\newtheorem{prop}[thm]{Proposition}
\theoremstyle{definition}
\newtheorem{OBS}[thm]{Remark}

\newtheorem{dfn}[thm]{Definition}

\DeclareMathOperator{\field}{k}

\DeclareMathOperator{\spm}{\textbf{Spm}}

\usepackage{tikz-cd}
\usepackage{todonotes}

\title{Non-algebraicity of foliations via reduction modulo $2$}

\author[Jo\~ao Paulo Figueredo]{Jo\~ao Paulo Figueredo}

\author[Wodson Mendson]{Wodson Mendson}

\address{J. P. Figueredo, Universidade Federal Fluminense, Instituto de Matemática e Estatística.
Rua Alexandre Moura 8, São Domingos, 24210-200 Niterói RJ, Brazil.}
\email{joaoplf@id.uff.br}

\address{W. Mendson, Universidade Federal Fluminense, Instituto de Matemática e Estatística.
Rua Alexandre Moura 8, São Domingos, 24210-200 Niterói RJ, Brazil.}
\email{oliveirawodson@gmail.com}

\begin{document}

\subjclass[2010]{32S65; 13N15; 13A35}
\keywords{Foliations; Foliations in positive characteristic;  Reduction modulo $p$; Invariant curves; $p$-divisor}

\begin{abstract}  Motivated by the Jouanolou foliation problem, we investigate the non-algebraicity of foliations by curves on $\mathbb{P}^2_{\mathbb{C}}$. We present a criterion to show that such a foliation has no algebraic invariant curves, using a method of reduction modulo $2$. Finally, using this criterion we give a new proof that the Jouanolou foliation of odd degree has no algebraic invariant curves. We also present other classes of foliations without algebraic invariant curves.
\end{abstract}

\maketitle

\setcounter{tocdepth}{1}
\tableofcontents

\section{Introduction} 

At the end of the XIX century, Darboux, Poincaré and others initiated the study of orbits of vector fields on the complex projective plane. This was the beginning of the theory of holomorphic foliations. Since then, several developments have been made on this subject, the study of the existence of algebraic orbits of such vector fields being one example. We can name the Poincaré problem, which asks for criteria to decide when the orbits of a complex vector field on the complex projective plane are algebraic. This is shown to be equivalent to bounding the degree of a generic orbit (see \cite{poincareproblem}).

In general, given a foliation on the complex projective plane, it is unlikely that it has an algebraic invariant curve. To be more precise, considering the set ${\rm Fol}_d(\mathbb{P}^2_\mathbb{C})$ of all holomorphic foliations with a fixed degree $d \geq 2$ on $\mathbb{P}^2_\mathbb{C}$, one can endow ${\rm Fol}_d(\mathbb{P}^2_\mathbb{C})$ with the structure of a quasi-projective variety. It turns out that the foliations having no algebraic invariant curve form a countable intersection of open subsets of ${\rm Fol}_d(\mathbb{P}^2_\mathbb{C})$. It is also dense, if one shows the existence of a foliation of degree $d$ having no algebraic invariant curve. This is achieved via the Jouanolou foliation, which is given by the following vector field:
\[v_d = (xy^d-1)\partial_x - (x^d - y^{d+1})\partial_y\]
on $\mathbb{A}^2_\mathbb{C}$.

Jouanolou \cite{jouanolou} showed that this foliation does not have algebraic invariant curves using the structure of the group of automorphisms of $\mathbb{P}^2_\mathbb{C}$ acting on the foliation. More recently, the second named author has showed in \cite{mendson2023arithmetic} the same result, when the degree of the foliation is odd and $d \not \equiv 1 \mod 3$, using a reduction modulo $p$ method. This was achieved considering the reduction modulo $2$ of the foliation. 

For a foliation by curves $\mathcal{F}$ in characteristic $p$, if $v$ is a vector field tangent to $\mathcal{F}$, then we may ask if $v^p$, which is also a vector field since the characteristic is $p$, is also tangent to $\mathcal{F}$. If this is the case, we say that $\mathcal{F}$ is $p$-closed. The $p$-closed foliations on $\mathbb{P}^2_{\overline{\mathbb{F}}_p} $ correspond to finite inseparable morphisms $\mathbb{P}^2_{\overline{\mathbb{F}}_p} \rightarrow Y$ of degree $p$, where $Y$ is normal (see \cite{rudakov1976inseparable}). If $\mathcal{F}$ is not $p$-closed, the vanishing locus of $v\wedge v^p$ defines a divisor $\Delta_\mathcal{F}$, called the $p$-divisor of the reduction modulo $p$ of $\mathcal{F}$.

In the case of the Jouanolou foliation reduced modulo $2$, \cite{mendson2023arithmetic} shows that it is not $2$-closed and that its $2$-divisor $\Delta$ is irreducible. Then an argument comparing the possible degree of an invariant curve with the degree of the $2$-divisor shows that it is not possible to have an algebraic invariant curve (see proof of Theorem \ref{main}).

Motivated by the Jouanolou foliation problem, in this paper we treat the problem of determining the non existence of algebraic invariant curves for holomorphic foliations on the complex projective plane. We will pursue this problem via the same approach used in \cite{mendson2023arithmetic}, i.e. using reduction modulo $p$ methods. In this direction, our main result is the following (see Theorem \ref{main}; compare with \cite{bost2001algebraic}):

\begin{theorem} \label{thmA}
     Let $\mathcal{F}$ be a foliation on $\mathbb{P}_{\mathbb{C}}^{2}$ of degree $d > 1$ with the following properties:
\begin{enumerate}
    \item $\mathcal{F}$ is defined over a number field;
    \item $\mathcal{F}$ is not dicritical;
    \item $\mathcal{F}$ has good reduction modulo $2$;
    \item The reduction modulo $2$ of $\mathcal{F}$ has irreducible $2$-divisor.
\end{enumerate}
Then $\mathcal{F}$ has no algebraic solutions.
\end{theorem}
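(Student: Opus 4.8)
The plan is to argue by contradiction, pushing a hypothetical algebraic solution down to characteristic $2$ and confronting it with the $2$-divisor, whose degree turns out to be far larger than any invariant curve can be when $\mathcal F$ is not dicritical.

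First I would descend and reduce. Suppose $\mathcal F$ has an algebraic solution; passing to an irreducible component we may assume it is irreducible. By Carnicer's theorem (the non-dicritical case of the Poincaré problem, available thanks to $(2)$) every algebraic solution of $\mathcal F$ has degree $\le d+2$, so the curves of degree $\le d+2$ invariant by $\mathcal F$ form a nonempty closed subscheme of a projective space, cut out by the invariance equations $\omega\wedge dF\in(F)$, hence defined over the number field of definition of $\mathcal F$ provided by $(1)$; it therefore has a point over some number field $L$. Fix an irreducible algebraic solution $C\subset\mathbb P^2_L$ of degree $m\le d+2$, and a prime $\mathfrak p$ of $\mathcal O_L$ above $2$ lying outside the finite set of primes of bad behaviour, so that by $(3)$ the reduction $\mathcal F_\mathfrak p$ is a foliation of degree $d$ on $\mathbb P^2$ over the residue field and the primitive homogeneous equation of $C$ reduces to a nonzero polynomial of degree $m$. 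Reducing the invariance relation modulo $\mathfrak p$ then shows that $C_\mathfrak p$ is a degree-$m$ curve invariant by $\mathcal F_\mathfrak p$; base-changing to $\overline{\mathbb F}_2$ changes nothing, and by $(4)$ the $2$-divisor $\Delta=\Delta_{\mathcal F_\mathfrak p}$ is a reduced irreducible curve, so in particular $\mathcal F_\mathfrak p$ is not $2$-closed.

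The key observation is that any invariant curve of a non-$2$-closed foliation is contained in its $2$-divisor. Indeed, if $v$ is a local vector field generating the foliation and $f$ a local equation of the invariant curve, then $v(f)=fg$ for a regular $g$, and writing $v^k(f)=f h_k$ one gets inductively $h_{k+1}=v(h_k)+gh_k$, so $v^k(f)\in(f)$ for all $k\ge1$; since $v^2$ is again a derivation in characteristic $2$, the curve is also invariant by $v^2$, hence $v$ and $v^2$ are collinear at every smooth point of the curve that is not a singular point of the foliation, and $v\wedge v^2$ vanishes identically along the curve. Applying this to $C_\mathfrak p$ gives $\operatorname{supp}(C_\mathfrak p)\subseteq\operatorname{supp}(\Delta)$; as $\Delta$ is reduced and irreducible, $C_\mathfrak p$ is a positive multiple of $\Delta$, whence $m=\deg C_\mathfrak p\ge\deg\Delta$. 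To close the loop I would compute $\deg\Delta$: for a unit $g$ one has $(gv)^2=g^2v^2+g\,v(g)\,v$, so $(gv)\wedge(gv)^2=g^3\,(v\wedge v^2)$, and the locally defined bivectors $v\wedge v^2$ therefore glue into a global section of $\wedge^2T_{\mathbb P^2}\otimes K_{\mathcal F}^{\otimes3}\cong\mathcal O(3)\otimes\mathcal O(3(d-1))=\mathcal O(3d)$; thus $\deg\Delta=3d$. Combining with $m\le d+2$ gives $3d\le d+2$, i.e. $d\le1$, contradicting $d>1$. Hence $\mathcal F$ has no algebraic solutions.

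The two substantive ingredients — that invariant curves are swallowed by the $2$-divisor, and that $\deg\Delta=3d$ — are precisely what the earlier sections are meant to supply; the remaining difficulty is arithmetic bookkeeping, the delicate point being to make explicit what "good reduction modulo $2$" must guarantee and to verify that invariance, irreducibility and degree are all preserved under reduction at a prime avoiding the finite bad locus. Hypothesis $(2)$ enters only through Carnicer's bound $\deg C\le d+2$, which is exactly what converts "$C_\mathfrak p$ covers $\Delta$" into a contradiction; without a degree bound the argument would stop at $\deg C\ge 3d$.
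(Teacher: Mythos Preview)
Your overall strategy is the paper's: contradiction by confronting Carnicer's bound $\deg C\le d+2$ with $\deg\Delta=3d$ after reducing modulo $2$. The descent via the scheme of invariant curves of bounded degree is a clean variant of the paper's explicit Galois-orbit argument (Proposition~\ref{coefi}), and the degree computation for $\Delta$ is correct.

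There is, however, a genuine gap at the step ``$\operatorname{supp}(C_\mathfrak p)\subseteq\operatorname{supp}(\Delta)$''. Your local argument---from $v(f)\in(f)$ conclude $v^2(f)\in(f)$, hence $v$ and $v^2$ are collinear along the smooth locus of $\{f=0\}$---is valid only when $\{f=0\}$ actually has smooth points, i.e.\ when $f$ is not a square. Nothing you have written rules out that the primitive equation $F$ of $C$ reduces modulo $\mathfrak p$ to a perfect square $G^2$: in that case $f_x=f_y=0$ identically in characteristic $2$, the relation $v(f)=0\in(f)$ holds for \emph{every} vector field $v$ and carries no tangency information, and there is no reason for $v\wedge v^2$ to vanish along $\{G=0\}$ (e.g.\ $v=\partial_x+x\partial_y$ over $\overline{\mathbb F}_2$ has $v\wedge v^2=\partial_x\wedge\partial_y$ nowhere zero, yet $v(y^2)=0$). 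This is precisely the obstruction that the paper's Lemma~\ref{pfactor} removes: it uses hypothesis~(3) to show that an irreducible $\mathcal F$-invariant curve cannot reduce to a $p$-th power, because near a general point of $C$ the line $T_\mathcal F$ is spanned by $f_y\partial_x-f_x\partial_y$, and if $f\equiv g^p\pmod p$ this direction would vanish modulo $p$ along $C$, contradicting the saturation of $T_{\mathcal F}\otimes\overline{\mathbb F_p}$. Once that lemma is in hand, some irreducible factor $Q$ of $F\bmod\mathfrak p$ occurs with odd multiplicity, for that $Q$ the invariance $v(Q)\in(Q)$ is genuine, and the rest of your argument goes through unchanged.
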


The Jouanolou foliation of odd degree satisfies these hypotheses, and we improve the aforementioned result in \cite{mendson2023arithmetic} (see Corollary \ref{Jouuu}). Finally, by experimental tests using Singular (see Appendix), we can modify the vector field defining the Jouanolou foliation, and we give examples of other classes of foliations on $\mathbb{P}^2_{\mathbb{C}}$ which have no algebraic invariant curve. We also use our methods to give a proof that a foliation given in \cite{alcantara2020foliations} has only one algebraic invariant curve.

\subsection{Organization of the paper} In Section \ref{notacoes} we fix the notations that will be used in the paper. In Section \ref{pdivisoremP2} we recall the definition of foliation on a smooth algebraic surface and the notion of the $p$-divisor of foliations in characteristic $p>0$, and we discuss some topics related to the reduction modulo $p$ of foliations.  In Section \ref{ring}, we investigate the ring of definition of algebraic invariant curves under a foliation. In the last section, Section \ref{algtwo}, we prove Theorem \ref{thmA} and we construct new families, via methods of reduction modulo two, of foliations in the complex projective plane that have no algebraic invariant curves.

\section{Notation} \label{notacoes}

\begin{itemize}

    \item $\Delta_{\mathcal{F}}$ = the $p$-divisor associated to a foliation $\mathcal{F}$;
    
    \item \textbf{$p$-factor of an effective divisor $E$} = an effective divisor of the form $pD$ with $pD\leq E$;

    \item $\spm(R) = $ the collection of maximal ideals of a domain $R$;

    \item $\mathbb{F}_{\mathfrak{q}} = R/\mathfrak{q}$, the residue field of $\mathfrak{q}\in \spm(R)$;
    
    \item $\mathbb{Z}[\mathcal{F}]=$ the $\mathbb{Z}$-algebra obtained by adjunction of all coefficients which appear in $\mathcal{F}$ and the variety on which $\mathcal{F}$ is defined;

    \item $\mathcal{F}_\mathfrak{p} = $ the reduction modulo $\mathfrak{p}$ of $\mathcal{F}$;
    
\end{itemize}

\section{Foliations in positive characteristic and the \texorpdfstring{$p$}{p}-divisor}\label{pdivisoremP2}

We refer to \cite{MR3328860} for the basic theory of holomorphic foliations, and to \cite{mendson2023arithmetic} for the basic theory of foliations in positive characteristic. We review the latter here. Let $\field$ be an algebraically closed field of characteristic $p > 0$. Let $S$ be a regular algebraic surface over $\field$. A foliation $\mathcal{F}$ on $S$ is given by a sub-line-bundle $T_{\mathcal{F}} \subset T_{S}$ such that $T_{S}/T_{\mathcal{F}}$ is torsion-free. The inclusion $T_\mathcal{F} \subset T_{S}$ induces a non-zero global section $v \in {\rm H}^0(S, T_{S} \otimes T_{\mathcal{F}}^*)$. We say that $\mathcal{F}$ is $p$-closed if $v \wedge v^p = 0$.

\begin{OBS}
    By duality, one may define $\mathcal{F}$ by a sub linebundle $N_\mathcal{F}^* \subset \Omega_S^1$, where $N_\mathcal{F} = (T_S/T_\mathcal{F})^{**}$. Thus we get a non-zero global section $\omega \in {\rm H}^0(S, \Omega_S^1 \otimes N_\mathcal{F})$, and $\mathcal{F}$ will be $p$-closed iff $\iota_{v^p}(\omega) = 0$.
\end{OBS}

If $\mathcal{F}$ is not $p$-closed, let $\Delta_{\mathcal{F}}$ be the divisor given by the vanishing of $v \wedge v^p$. We call $\Delta_{\mathcal{F}}$ the $p$-divisor associated to $\mathcal{F}$. We have $\mathcal{O}_S(\Delta_\mathcal{F}) \cong N_\mathcal{F} \otimes T_\mathcal{F}^{\otimes -p}$. In the case $S = \mathbb{P}^2_{\field}$, the $p$-divisor of a foliation of degree $d$ has degree $p(d-1)+d+2$.

\subsection{Reduction modulo \texorpdfstring{$p$}{p} of foliations}

Let $S$ be a complex regular quasi-projective surface, and let $\mathcal{F}$ be a holomorphic foliation on $S$. Let $v_1, \dots, v_n$ be local vector fields defining $\mathcal{F}$ on $S$, and $F_1,\dots,F_m$ be polynomials which define $S$ in some open set of a projective space. We define $\mathbb{Z}[\mathcal{F}]$ as the smallest sub-$\mathbb{Z}$-algebra of $\mathbb{C}$ which the coefficients of the $v_i$ and the $F_j$ are defined. This is independent of the choice of the $v_i$ and the $F_j$. 

Let $\mathfrak{p}\in \spm(\mathbb{Z}[\mathcal{F}])$ be a maximal ideal and let $\mathbb{F}_{\mathfrak{p}} = \mathbb{Z}[\mathcal{F}]/\mathfrak{p}$.  Note that $\mathbb{F}_{\mathfrak{p}}$ is a finite field by \cite[Tag00GB]{stacks-project}, and, in particular, it has characteristic $p>0$.

\begin{dfn} 
    We define the \textbf{reduction modulo $\mathfrak{p}$} of $\mathcal{F}$ as the foliation $\mathcal{F}_{\mathfrak{p}}$ on $S_\mathfrak{p}$ given by the saturation of $T_{\mathcal{F}} \otimes {\rm Spec}(\overline{\mathbb{F}_{\mathfrak{p}}})$ in $T_{S_\mathfrak{p}}$, where $S_\mathfrak{p} = S \times_{{\rm Spec}(\mathbb{Z}[\mathcal{F}])} {\rm Spec}(\overline{\mathbb{F}_{\mathfrak{p}}})$.
\end{dfn}

\section{The ring of definition of invariant curves}\label{ring}

If a foliation $\mathcal{F}$ on $\mathbb{P}^2_{\mathbb{C}}$ is defined over a ring $R \subset \overline{\mathbb{Q}}$, and has an invariant algebraic curve $C$, then it is not necessarily true that $C$ is defined over $R$. In this section, we show, however, that in this case it is possible to find another curve which is invariant by $\mathcal{F}$ and defined over $R$. 

The following proposition is well-known, and we give its proof for the reader's convenience.

\begin{prop}\label{ext} 
    Let $R$ be a domain and $f, g \in R[x_1,\ldots,x_n]$ non-constant polynomials without common factors. Let $\field$ be a field containing $R$. Then $f\otimes \field$ and $g\otimes \field$ do not have common factors in $\field[x_1,\ldots,x_n]$. 
\end{prop}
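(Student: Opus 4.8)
The plan is to reduce the statement to a fact about polynomials over a field, and then to invoke Gauss's lemma over a suitable localization. First I would reduce to the case where $R$ is a field: the hypothesis that $f$ and $g$ have no common factor in $R[x_1,\dots,x_n]$ is the only place $R$ enters, so I want to upgrade this to a statement over $\mathrm{Frac}(R)$. Let $K = \mathrm{Frac}(R)$. By Gauss's lemma, $R[x_1,\dots,x_n]$ and $K[x_1,\dots,x_n]$ have "the same" irreducible factorizations up to units and primitive parts: a primitive polynomial in $R[x_1,\dots,x_n]$ that is irreducible stays irreducible in $K[x_1,\dots,x_n]$, and conversely a factorization over $K$ can be cleared of denominators to give one over $R$. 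Consequently, if $f$ and $g$ had a common factor $h$ of positive degree in $K[x_1,\dots,x_n]$, then after scaling we may take $h \in R[x_1,\dots,x_n]$ primitive, and it would divide both $f$ and $g$ in $R[x_1,\dots,x_n]$ (again using Gauss's lemma to control the quotient), contradicting the hypothesis. So we may assume from the start that $R = K$ is a field.

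Now $K \subseteq \field$ is a field extension, and I must show that coprime $f, g \in K[x_1,\dots,x_n]$ remain coprime in $\field[x_1,\dots,x_n]$. The key tool is that $K[x_1,\dots,x_n]$ is a UFD (indeed a Noetherian UFD) and that coprimality can be detected by the resultant or, more robustly, by the ideal they generate. The cleanest route: since $f$ and $g$ are coprime in the UFD $K[x_1,\dots,x_n]$, the ideal $(f,g)$ has height $\geq 2$, equivalently $K[x_1,\dots,x_n]/(f,g)$ has dimension $\leq n-2$. I would argue that for any field extension $\field/K$, $\dim \field[x_1,\dots,x_n]/(f,g) = \dim K[x_1,\dots,x_n]/(f,g)$, since dimension of a finitely generated algebra over a field is insensitive to field extension (it equals the transcendence degree of the function field of each component, computed geometrically). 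Hence $(f \otimes \field, g \otimes \field)$ also has height $\geq 2$ in $\field[x_1,\dots,x_n]$, which forces $f\otimes\field$ and $g\otimes\field$ to have no common factor of positive degree; and since $f, g$ are non-constant in $\field[x_1,\dots,x_n]$ as well (their degrees are preserved under field extension), "no common factor of positive degree" is exactly "no common factor".

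An alternative and perhaps more elementary route avoids dimension theory: I could choose a variable, say $x_n$, in which both $f$ and $g$ have positive degree (possible after a linear change of coordinates if one of them is constant in every single variable — but actually, if $f$ has positive total degree, it involves some $x_i$, so WLOG $x_n$, and likewise for $g$, and one reduces to a common variable by a generic linear change). Viewing $f, g \in K[x_1,\dots,x_{n-1}][x_n]$, coprimality in $K[x_1,\dots,x_n]$ together with primitivity (again Gauss's lemma, now over the UFD $K[x_1,\dots,x_{n-1}]$) implies $f$ and $g$ are coprime in the PID $K(x_1,\dots,x_{n-1})[x_n]$, hence their resultant $\mathrm{Res}_{x_n}(f,g) \in K[x_1,\dots,x_{n-1}]$ is nonzero; this resultant is a universal polynomial expression in the coefficients, so it does not change under the extension $K \hookrightarrow \field$ and remains nonzero, whence $f$ and $g$ are coprime in $\field(x_1,\dots,x_{n-1})[x_n]$, and then Gauss's lemma going back down gives coprimality in $\field[x_1,\dots,x_n]$. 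The bookkeeping for the degenerate case (one of $f,g$ depending on a disjoint set of variables from the other) needs a little care but is handled by the linear change of coordinates, which preserves coprimality and commutes with base change.

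The main obstacle, such as it is, is purely organizational: making the two applications of Gauss's lemma (once to descend from $K$ to $R$, once to pass between $K[x_1,\dots,x_n]$ and the localization that is a PID or a polynomial ring over a field) clean and correctly stated in several variables, and handling the trivial edge cases (constant polynomials, polynomials in disjoint variable sets) so that the statement "no common factor" is unambiguous. No deep input is required beyond that $R[x_1,\dots,x_n]$ is a UFD when $R$ is and the standard behavior of resultants (or Krull dimension) under field extension.
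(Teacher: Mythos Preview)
Your proposal is correct, and your second (``resultant'') route is essentially the paper's argument, though the paper streamlines it in two ways worth noting. First, the paper does not make a preliminary reduction from $R$ to $K = \mathrm{Frac}(R)$; it works directly over $R[x_1,\dots,x_{n-1}]$ and invokes Gauss's lemma once to pass to the PID $\mathrm{Frac}(R[x_1,\dots,x_{n-1}])[x_n]$. Second, and more usefully, the paper avoids your linear change of coordinates by choosing the distinguished variable \emph{from the hypothetical common factor} $H$ rather than from $f$ and $g$: it assumes $\deg_{x_n} H > 0$, which forces $x_n$ to appear in both $f$ and $g$ automatically. From there it writes a B\'ezout identity $Af + Bg = 1$ in the PID, clears denominators to get $af + bg = c$ with $c \in R[x_1,\dots,x_{n-1}]$, and base-changes to $\field$ to find $H \mid c$, contradicting $\deg_{x_n} c = 0$. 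This is the same mechanism as your resultant (both encode coprimality in a one-variable PID by a relation surviving field extension), just packaged without the edge-case bookkeeping you flagged.

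Your first route via Krull dimension (height of $(f,g)$ is preserved under field extension) is a genuinely different and somewhat more conceptual argument; it trades the explicit B\'ezout/resultant computation for an appeal to invariance of dimension under base field extension. It is cleaner in that it handles all variables at once and needs no choice of $x_n$, at the cost of invoking a heavier fact. Either approach works; the paper's is the more elementary and self-contained of the three.
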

\begin{proof} 
    Suppose by contradiction that there exists $H$ a common factor of $f\otimes \field$ and $g\otimes \field$. Assume, WLOG, that $\deg_{x_{n}}(h)>0$. In particular, $x_n$ appears in $f\otimes \field$ and $g\otimes \field$. Since $f$ and $g$ do not have a common factor in $(R[x_1,\ldots,x_{n-1}])[x_n]$ Gauss Lemma implies that $f$ and $g$ do not have a common factor in $K[x_n]$, where $K$ is the fraction field of $R[x_1,\ldots,x_{n-1}]$. Since $K[x_n]$ is PID there exists a relation
    $$
        Af+Bg = 1
    $$
    where $A,B \in K$. Clearing denominators, it follows that 
    $$
        af+bg = c
    $$
    for some $a,b,c \in R[x_1,\ldots,x_{n-1}]$. Now, scalar extension to $\field$ implies that $H$ divides $c$ in $\field[x_1,\ldots,x_n]$. This implies
    $$
        0 = \deg_{x_{n}}(c)\geq \deg_{x_n}(H)>0
    $$
    a contradiction.
\end{proof}

\begin{cor} 
    Let $R$ be a UFD of characteristic zero and $f \in R[x_1,\ldots,x_n]$ be non-constant and irreducible. Then, for every field $K$ which contains $R$, the polynomial $f\otimes K$ is reduced in $\field[x_1,\ldots,x_n]$.
\end{cor}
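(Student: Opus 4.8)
The plan is to argue by contradiction, using the classical squarefreeness criterion via partial derivatives together with Proposition \ref{ext}. Suppose $f\otimes K$ is not reduced in $K[x_1,\dots,x_n]$. Since $R$ is a UFD, so is $R[x_1,\dots,x_n]$, and since $K$ is a field, $K[x_1,\dots,x_n]$ is a UFD as well; hence non-reducedness means there is a non-constant (irreducible) polynomial $g \in K[x_1,\dots,x_n]$ with $g^2 \mid f\otimes K$, say $f\otimes K = g^2 h$. Because $f$ is non-constant, some variable $x_i$ genuinely occurs in $f$, so $\deg_{x_i} f \geq 1$; and because $R$ has characteristic zero, $\partial_{x_i} f \in R[x_1,\dots,x_n]$ is nonzero (writing $f = \sum_k a_k x_i^k$ with $a_m \neq 0$, $m = \deg_{x_i} f$, the leading coefficient of $\partial_{x_i} f$ in $x_i$ is $m a_m$, which is nonzero in the characteristic-zero domain $R$; moreover $\deg_{x_i}(\partial_{x_i} f) = m-1$).

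Next I would transport this to $K$. The formation of partial derivatives commutes with the inclusion $R \hookrightarrow K$, so $\partial_{x_i} f \otimes K = \partial_{x_i}(f\otimes K)$; differentiating $f\otimes K = g^2 h$ gives $\partial_{x_i} f \otimes K = g\,(2h\,\partial_{x_i} g + g\,\partial_{x_i} h)$. Thus the non-constant $g$ divides both $f\otimes K$ and $\partial_{x_i} f\otimes K$ in $K[x_1,\dots,x_n]$. Now there are two cases for $\partial_{x_i} f$. If $\partial_{x_i} f$ is a nonzero constant of $R$, then $\partial_{x_i} f\otimes K$ is a nonzero element of $K$, so $g\mid \partial_{x_i} f\otimes K$ forces $g$ to be a unit, contradicting $\deg g \geq 1$. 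Otherwise $\partial_{x_i} f$ is non-constant; then $f$ and $\partial_{x_i} f$ have no common factor in $R[x_1,\dots,x_n]$, because $f$ is irreducible so any common factor is a unit or an associate of $f$, but $\deg_{x_i}(\partial_{x_i} f) = \deg_{x_i}(f) - 1 < \deg_{x_i}(f)$ with $\partial_{x_i} f \neq 0$, so $f\nmid \partial_{x_i} f$. Applying Proposition \ref{ext} to the non-constant polynomials $f$ and $\partial_{x_i} f$ over $R$, we conclude that $f\otimes K$ and $\partial_{x_i} f\otimes K$ have no common factor in $K[x_1,\dots,x_n]$, contradicting that the non-constant $g$ divides both. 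Hence $f\otimes K$ is reduced.

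There is no serious obstacle here; the points requiring care are (i) that the characteristic-zero hypothesis is essential, precisely because it guarantees $\partial_{x_i} f \neq 0$ (in characteristic $p$ the statement fails, e.g.\ $f = x_1^p - t$ over $R = \mathbb{F}_p[t]$, which becomes $(x_1 - t^{1/p})^p$ over a suitable $K$), and (ii) the degenerate case in which $\partial_{x_i} f$ is a nonzero constant, which is not literally covered by Proposition \ref{ext} and must be handled directly as above.
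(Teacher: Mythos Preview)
Your proof is correct and follows essentially the same route as the paper's: reduce reducedness to squarefreeness, use that $f$ irreducible in characteristic zero forces $f$ and some $\partial_{x_i} f$ to be coprime in $R[x_1,\dots,x_n]$, and then invoke Proposition~\ref{ext} to propagate coprimality to $K$. You are simply more careful than the paper about the degenerate case where $\partial_{x_i} f$ is a nonzero constant, which indeed falls outside the literal hypotheses of Proposition~\ref{ext}.
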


\begin{proof} 
    We know that $f$ is reduced if and only if $f$ is square-free. On the other hand, $f$ is square-free if and only if for some canonical derivation $D_i := \partial_{x_i}$ the polynomials $f$ and $D_i(f)$ do not have common factors and the result follows from Proposition \ref{ext}   
\end{proof}

Now, we apply Proposition \ref{ext} in the context of foliations. See \cite[Theorem 2.2]{coutinho2018bounding} and \cite[Proposition]{man1997rational}.
\begin{prop}\label{coefi} 
    Let $\mathcal{F}$ be a foliation on the complex projective plane defined by a projective $1$-form defined over a domain $R\subset \overline{\mathbb{Q}}$. Let $C = \{F = 0\}$ be an irreducible algebraic curve that is $\mathcal{F}$-invariant. Then, there exists a reduced algebraic curve that is $\mathcal{F}$-invariant defined by an irreducible polynomial over $R$.
\end{prop}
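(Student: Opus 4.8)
The plan is to leverage Proposition~\ref{ext} together with a Galois-theoretic descent argument. Write $\mathcal{F}$ as given by a projective $1$-form $\omega$ with coefficients in $R$, and let $C = \{F = 0\}$ with $F \in \overline{\mathbb{Q}}[x,y,z]$ irreducible and homogeneous. First I would replace $\overline{\mathbb{Q}}$ by a finite Galois extension: since $F$ has only finitely many coefficients, there is a finite extension $L/\mathrm{Frac}(R)$, which we may take to be Galois with group $G$, such that $F \in L[x,y,z]$. For each $\sigma \in G$, the polynomial $F^\sigma$ (obtained by applying $\sigma$ to the coefficients) still defines a curve invariant under $\mathcal{F}$, because $\omega$ has coefficients fixed by $\sigma$, so the invariance condition $\omega \wedge dF = F\,\eta$ is preserved under $\sigma$.

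Next I would form the product $P = \prod_{\sigma \in G} F^\sigma$. This polynomial is $G$-invariant, hence has coefficients in $\mathrm{Frac}(R)$; after clearing denominators we may assume $P \in R[x,y,z]$, and by dividing by the gcd of its coefficients we may take it primitive. Each factor $F^\sigma$ is irreducible (being a Galois conjugate of the irreducible $F$), so $P$ is a product of irreducibles and its reduction (removing repeated factors) defines a reduced curve $C'$ which is $\mathcal{F}$-invariant, since a product, or a squarefree part of a product, of invariant curves is invariant. The remaining point is to argue that $C'$ is defined over $R$: the squarefree part of $P$ is $P/\gcd(P, \partial_x P, \partial_y P, \partial_z P)$ up to a constant, and since $R$ has characteristic zero this gcd computation, by Proposition~\ref{ext} applied to $R \hookrightarrow \overline{\mathbb{Q}}$, is compatible with base extension; hence the squarefree part already has coefficients (up to a unit) in $R$, i.e.\ $C'$ is defined by a polynomial over $R$. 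Finally, replacing this squarefree defining polynomial by any one of its irreducible factors over $R$ gives an irreducible polynomial over $R$ cutting out an $\mathcal{F}$-invariant curve, which is what is claimed.

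The main obstacle I anticipate is bookkeeping around the passage from $\mathrm{Frac}(R)$ back down to $R$: a priori the $G$-invariant polynomial $P$ only lands in the fraction field, and one must check that, after normalizing to be primitive, removing repeated factors does not reintroduce denominators. This is exactly where the characteristic-zero hypothesis and Proposition~\ref{ext} (squarefree-ness is detected by having no common factor with a partial derivative, and this is stable under field extension) are essential — it is the analogue of the Corollary above, and without it one could not guarantee that the reduced curve descends to $R$. A secondary, more routine point is verifying carefully that invariance of $C = \{F=0\}$ under a foliation given by a $1$-form means precisely $\omega \wedge dF \in (F)$, and that this relation is stable both under applying $\sigma \in G$ and under passing to products and squarefree parts; this is standard but should be spelled out.
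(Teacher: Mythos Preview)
Your Galois-descent strategy matches the paper's argument almost exactly: form the product of Galois conjugates, descend to $\mathrm{Frac}(R)$, clear denominators, take an irreducible factor over $R$, and invoke Proposition~\ref{ext} (via its Corollary) to see this factor stays reduced over $\mathbb{C}$. The paper in fact skips your intermediate ``squarefree part'' step and goes straight to picking an irreducible factor $Q_l$ of $Q$ in $R[x,y,z]$; the worry you flag about reintroducing denominators does not arise, because one simply factors in $\mathrm{Frac}(R)[x,y,z]$ and then scales each factor into $R[x,y,z]$.

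There is, however, one genuine gap at the very start. You write ``let $C=\{F=0\}$ with $F\in\overline{\mathbb{Q}}[x,y,z]$'', but the hypothesis only gives $F\in\mathbb{C}[x,y,z]$. Nothing in the statement forces the invariant curve to have algebraic coefficients: for instance, if $\mathcal{F}$ admits a rational first integral $f/g$ with $f,g$ defined over $\overline{\mathbb{Q}}$, then $f-\lambda g$ is $\mathcal{F}$-invariant and irreducible for generic transcendental $\lambda$. In that situation the field generated over $\mathrm{Frac}(R)$ by the coefficients of $F$ is not algebraic, so your ``finite Galois extension $L/\mathrm{Frac}(R)$'' does not exist and the whole descent cannot begin. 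The paper handles this with a preliminary specialization step: split $F=F_a+F_l$ into its algebraic-coefficient and transcendental-coefficient parts, and apply an evaluation homomorphism $ev_c\colon \overline{\mathbb{Q}}[\{X_\beta\}]\to\overline{\mathbb{Q}}$ sending the transcendental coefficients to suitably chosen algebraic numbers. Because $\omega$ has coefficients in $R\subset\overline{\mathbb{Q}}$, applying $ev_c$ to the invariance relation $\omega\wedge dF\in(F)$ yields invariance of the specialized polynomial $P=ev_c(F)\in\overline{\mathbb{Q}}[x,y,z]$. Only after this step does the Galois argument you describe go through.
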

\begin{proof}  
    Fix $x_0, x_1, x_2$ homogeneous coordinates of $\mathbb{P}_{\mathbb{C}}^{2}$ and write $F=F_a+F_l$, where $F_a = \sum_{\alpha \in A} a_{\alpha}\textbf{x}^{\alpha}$ and $F_{l} = \sum_{\beta \in B} b_{\beta}\textbf{x}^{\beta}$ with $a_{\alpha}\in \overline{\mathbb{Q}}$ for every $\alpha \in A$ and $b_{\beta} \in \mathbb{C}-\overline{\mathbb{Q}}$ for every $\beta \in B$. Choose algebraic numbers $\{c_{\beta}\}_{\beta \in B}$ such that $\sum_{\beta \in B} c_{\beta}\textbf{x}^{\beta} \neq -\sum_{\alpha \in A}a_{\alpha}\textbf{x}^{\alpha}$ and consider the specialization homomorphism 
    $$
        ev_{c}\colon \overline{\mathbb{Q}}[\{X_{\beta}\mid \beta \in B\}] \longrightarrow \overline{\mathbb{Q}} \qquad P(\{X_{\beta}\})\mapsto P(\{c_{\beta}\})
    $$
    and define $P= ev_{c}(F)$. Note that $P$ is defined over $\overline{\mathbb{Q}}$ and since $\mathcal{F}$ is defined over $R \subset \overline{\mathbb{Q}}$, then applying $ev_c$ to the algebraic equation giving the $\mathcal{F}$-invariance of $F$, we get an equation giving the $\mathcal{F}$-invariance of $P$; in other words, the curve defined by $P$ is $\mathcal{F}$-invariant. Let $K$ be the fraction field of $R$ and consider $L|K$ the smallest Galois extension where $P$ is defined. Define
    $$
        Q = \prod_{\sigma \in G_{L|K}}P^{\sigma} \in L[x,y,z]
    $$
    where $G_{L|K}$ is the Galois group of $L|K$. The coefficients of $Q$ belong to $K$, and we can assume that $Q$ is defined over $R$, cleaning denominators if necessary. Since $\mathcal{F}$ is defined over $R$, the same argument as in the previous paragraph implies that the curve defined by $Q$ is $\mathcal{F}$-invariant.  Now, let $Q_{l} \in R[x,y,z]$ be an irreducible factor of $Q$. Since $\mathcal{Z}(Q) $ is $\mathcal{F}$-invariant we have $\mathcal{Z}(Q_l)$ $\mathcal{F}$-invariant. Proposition \ref{ext} ensures that $Q_{l}\otimes\mathbb{C}$ is reduced. This finishes the proof.
\end{proof}

\section{Non-algebraicity via characteristic two}\label{algtwo}

In this section we will study the non-algebraicity of foliations on $\mathbb{P}^2_{\mathbb{C}}$ via reduction modulo $2$. More precisely, we will consider non-$2$-closed foliations $\mathcal{F}$ and we will show that if $\Delta_\mathcal{F}$ is irreducible mod $2$ and $\mathcal{F}$ is non-dicritical, then it has no algebraic invariant curve. We use this result to show that the Jouanolou foliation of odd degree $d$ has no algebraic invariant curve. We also use this result to construct classes of foliations on $\mathbb{P}^2_\mathbb{C}$ having at most one algebraic invariant curve.

\begin{dfn}Given a regular projective variety $X \subset \mathbb{P}^n_\mathbb{C}$, defined over a finitely generated $\mathbb{Z}$-algebra $R$, we say that $\mathfrak{p} \in {\rm Spec}(R)$ is a prime of good reduction of $X$ if $X \times_{{\rm Spec}(R)} {\rm Spec}(\overline{R/\mathfrak{p}})$ is regular. Let $\mathcal{F}$ be foliation on $X$ and $\mathfrak{p} \in {\rm Spm}(\mathbb{Z}[\mathcal{F}])$. We say that $\mathfrak{p}$ is a prime of good reduction of $\mathcal{F}$ if $T_{\mathcal{F}_\mathfrak{p}} = T_\mathcal{F} \otimes {\rm Spec}(\overline{\mathbb{F}_\mathfrak{p}})$ and it has rank ${\rm rank}(T_\mathcal{F})$.
\end{dfn}

\begin{dfn} Let $S$ be a complex projective surface, $C$ be an irreducible curve in $S$ and $p \in \mathbb{Z}$ a prime of good reduction of $C$. We say that $C$ is a $p$-factor if $C \times_{{\rm Spec}(R)}{\rm Spec}(\overline{\mathbb{F}_p}) = pD$ for some effective divisor $D$ on $S \times_{{\rm Spec}(R)} {\rm Spec}(\overline{\mathbb{F}_p})$. 
\end{dfn}

\begin{prop} \label{prop:pdivides}
If $p$ does not divide $\deg(C)$ then $C\otimes \mathbb{F}_{p}$ is not a $p$-factor.
\end{prop}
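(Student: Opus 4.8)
The plan is to argue by degree considerations. Let $C = \{F = 0\}$ be an irreducible curve of degree $n = \deg(C)$ in $\mathbb{P}^2_\mathbb{C}$, defined over $R$, and suppose $p \nmid n$. Reduction modulo $p$ is compatible with the Hilbert polynomial, so $C \otimes \mathbb{F}_p$ is an effective divisor of degree $n$ on $\mathbb{P}^2_{\overline{\mathbb{F}_p}}$; equivalently, the defining polynomial $F \otimes \mathbb{F}_p$ is a homogeneous polynomial of degree $n$ (its top-degree data does not collapse since we are reducing over a prime of good reduction of $C$, so $F$ can be chosen with content $1$ in $R[x_0,x_1,x_2]$). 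If $C \otimes \mathbb{F}_p = pD$ for some effective divisor $D$ of degree $m$, then comparing degrees forces $n = pm$, so $p \mid n$, contradicting the hypothesis. Hence $C \otimes \mathbb{F}_p$ is not a $p$-factor.

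More carefully, I would phrase this at the level of defining equations to make the ``$p$-factor'' condition transparent. After clearing denominators we may assume $F \in R[x_0,x_1,x_2]$ is homogeneous of degree $n$ with content $1$; then for any maximal ideal $\mathfrak{p}$ of $R$ lying over $p$, the reduction $\bar F := F \otimes \mathbb{F}_{\mathfrak p}$ is nonzero and homogeneous of degree $n$, so $\mathcal{Z}(\bar F)$ is a degree-$n$ divisor. Saying $C \otimes \mathbb{F}_p$ is a $p$-factor means $\mathcal{Z}(\bar F) = pD$ as divisors, i.e. every irreducible component of $\bar F$ appears with multiplicity divisible by $p$; writing $\bar F = c \prod_i G_i^{a_i}$ with the $G_i$ irreducible and $p \mid a_i$ for all $i$, we get $n = \deg \bar F = \sum_i a_i \deg G_i \equiv 0 \pmod p$, again contradicting $p \nmid n$.

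The only genuinely delicate point is ensuring that the reduction of $C$ really is a divisor of degree exactly $n$ rather than something of smaller degree: this is where the ``good reduction'' hypothesis on $C$ (and the choice of $F$ with unit content) is used, since otherwise the leading terms of $F$ could all vanish modulo $p$ and the naive degree count would break. Granting that, the argument is a one-line numerical comparison. I expect no substantial obstacle beyond stating this normalization cleanly; in particular no properties of the foliation $\mathcal{F}$ are needed here — the statement is purely about the curve $C$.
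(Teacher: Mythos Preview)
Your argument is correct: if $C\otimes\mathbb{F}_p = pD$ then $\deg(C) = p\deg(D)$, contradicting $p\nmid\deg(C)$; the content-$1$ normalization of $F$ guarantees $\bar F\neq 0$ and hence $\deg(\bar F)=\deg(C)$. This is exactly the intended reasoning --- in fact the paper states Proposition~\ref{prop:pdivides} without proof, treating it as an immediate degree observation, so there is nothing further to compare against.
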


In the following lemma, we show a condition on the curve $C$ to ensure that $C\mod p$ is not a $p$-factor.

\begin{lemma}\label{pfactor} Let $\mathcal{F}$ be a foliation on a complex projective surface $S$ that has good reduction at a prime number $p \in \mathbb{Z}$. If $C$ is an irreducible invariant algebraic curve by $\mathcal{F}$ then $C$ is not a $p$-factor.
\end{lemma}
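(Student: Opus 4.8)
The plan is to reduce the statement to Proposition \ref{prop:pdivides} by showing that, for a curve $C$ invariant under a foliation $\mathcal{F}$ with good reduction at $p$, the prime $p$ cannot divide $\deg(C)$; or, if one does not want to go through the degree, to argue directly that $C \otimes \mathbb{F}_p$ being a $p$-factor $pD$ forces $D$ to be $\mathcal{F}_\mathfrak{p}$-invariant in a way that contradicts good reduction. I would first observe that since $p$ is a prime of good reduction of $\mathcal{F}$, the reduction $\mathcal{F}_\mathfrak{p}$ is genuinely a foliation (its tangent sheaf is a line bundle of the expected rank and equals $T_\mathcal{F} \otimes \overline{\mathbb{F}_p}$), so there is a local vector field $v$ tangent to $\mathcal{F}$ whose reduction $\bar v$ still defines $\mathcal{F}_\mathfrak{p}$ locally and does not vanish identically along any reduced component of $C \otimes \mathbb{F}_p$.

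The key step is the behavior of invariance under reduction. If $C = \{F = 0\}$ with $F \in R[x_0,x_1,x_2]$ (which we may assume defined over the base ring, after possibly replacing $R$; this is the content of Proposition \ref{coefi} in the projective-plane case, and holds in the same spirit on $S$), then the invariance equation $v(F) = F \cdot G$ for some $G$ reduces mod $\mathfrak{p}$ to $\bar v(\bar F) = \bar F \cdot \bar G$. Now suppose for contradiction that $C \otimes \mathbb{F}_p = pD$, i.e. $\bar F = u \cdot H^p$ up to a unit, where $H$ cuts out $D$. Applying $\bar v$ and using the characteristic-$p$ identity $\bar v(H^p) = 0$ (more precisely $\partial_{x_i}(H^p) = 0$ for each $i$, hence $\bar v(H^p) = 0$), we get $\bar v(\bar F) = \bar v(u) H^p$, so $H^p \mid \bar v(\bar F)$. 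Combined with $\bar v(\bar F) = \bar F \bar G = u H^p \bar G$ this is automatically consistent and gives no contradiction yet — so the real work is to extract the obstruction, and here is where I expect the main difficulty.

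The way around this is to pass to the degree/numerical side: $\deg(C) = \deg(\bar C)$ by flatness of the reduction (good reduction), and if $\bar C = pD$ then $p \mid \deg(\bar C) = \deg(C)$. So it suffices to show $p \nmid \deg(C)$, and by Proposition \ref{prop:pdivides} this is exactly what rules out $C \otimes \mathbb{F}_p$ being a $p$-factor — but that is circular unless we have an independent reason that $p \nmid \deg C$. The genuinely foliation-theoretic input must therefore be that an $\mathcal{F}$-invariant curve cannot reduce to a $p$-th power, and I would prove this by a \emph{separability} argument: the generic point of a reduced invariant curve of $\mathcal{F}_\mathfrak{p}$ is not a point where $\mathcal{F}_\mathfrak{p}$ is "as inseparable as a $p$-closed leaf", because the pullback of the defining $1$-form $\omega$ to $C \otimes \mathbb{F}_p$ vanishes (invariance), and if $\bar F = H^p$ then locally $dF \equiv 0$, so the tangency condition $\omega \wedge dF = 0$ degenerates; tracking this through the normal-bundle/conormal exact sequence for $C$ shows that $C$ would then be non-reduced \emph{as an invariant curve of a reduced foliation with good reduction}, contradicting that good reduction preserves the rank of $T_\mathcal{F}$ and hence the fact that $\mathcal{F}_\mathfrak{p}$ has no "extra tangencies" along reduced curves. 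Concretely: I would show $C \otimes \mathbb{F}_p = pD$ implies $D$ is $\mathcal{F}_\mathfrak{p}$-invariant and $dH$ vanishes nowhere to order forced by $\mathcal{F}_\mathfrak{p}$, then compute $\deg$ of both sides to land on $p \mid \deg C$, and finally invoke that $\deg$ is a reduction-invariant together with Proposition \ref{prop:pdivides} to close the loop — so the logical skeleton is: good reduction $\Rightarrow$ $\deg$ preserved; $p$-factor $\Rightarrow$ $p \mid \deg \bar C = \deg C$; Proposition \ref{prop:pdivides} $\Rightarrow$ contradiction. The main obstacle is making the first implication (that invariance plus a $p$-th-power reduction is impossible) genuinely rigorous without assuming $p \nmid \deg C$ at the outset; I expect it to follow from a careful local analysis at the generic point of each component of $\bar C$, using that $\bar v$ does not vanish there (good reduction) while $\bar v$ kills every $p$-th power, which pins down the multiplicity of $\bar C$ along that component to be coprime to $p$.
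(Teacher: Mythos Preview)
Your proposal does not reach a proof. You correctly identify that reducing to Proposition~\ref{prop:pdivides} via degrees is circular, and that the reduced invariance equation $\bar v(\bar F)=\bar F\,\bar G$ collapses to $0=0$ when $\bar F=H^p$. But the ``separability'' and ``multiplicity coprime to $p$'' remarks at the end are not an argument: saying ``$\bar v$ does not vanish there while $\bar v$ kills every $p$-th power'' juxtaposes two true facts without a mechanism linking them. Knowing that $\bar v(H^p)=0$ only tells you that the cofactor equation degenerates; it does not by itself force $\bar v$ to vanish along $D$, nor does it control the multiplicity of $\bar C$ along its components.

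The missing step is geometric and is exactly what the paper does. Because $C$ is $\mathcal{F}$-invariant and smooth at a general point $q$, the restriction $v|_C$ lands in $T_C\subset T_S|_C$, and $T_C$ is locally generated by $f_y\,\partial_x - f_x\,\partial_y$. Now use the $p$-factor hypothesis \emph{integrally}, before reducing: writing $f=g^p+ph$ over the base ring one gets $f_x,f_y\in p\cdot R[x,y]$, so every $R$-integral section of $T_C$ is $p$ times a section of $T_S|_C$. In particular $v|_C$ is divisible by $p$ as a section of $T_S|_C$, hence $\bar v$ vanishes identically along $(\bar C)_{\mathrm{red}}$. This forces the singular locus of $\mathcal{F}_\mathfrak{p}$ to contain a divisor, contradicting good reduction (which says $T_\mathcal{F}\otimes\overline{\mathbb{F}_\mathfrak{p}}$ is saturated, i.e.\ $\bar v$ has only isolated zeros). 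The contradiction you were circling around---``$\bar v$ does not vanish there''---is exactly the endpoint, but to reach it you must look at $v$ as a section of $T_S$ restricted to $C$, not at $v$ applied to the defining equation of $C$.
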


\begin{proof} Since $p$ is a prime of good reduction of $\mathcal{F}$ we know that $T_\mathcal{F} \otimes {\rm Spec}(\overline{\mathbb{F}_p})$ is saturated in $T_{S_\mathfrak{p}}$. Suppose, by contradiction, that $C \mod p$ is a $p$-factor. Locally around a general $q \in C$, we have that $C$ is the zero locus of $f$ and the vector space $T_q\mathcal{F}$ is generated by $f_y(q) \partial_x - f_x(q) \partial_y$, where $x$ and $y$ are local coordinates of $S$ around $q$. Since $C \mod p$ is a $p$ factor, there exist functions $g$ and $h$ in a  neighborhood of $q$ such that
\[f - g^p = ph.\]
Thus $f_x(q) = p(g^{p-1}(q)g_x(q)) + ph_x(q)$ and $f_y(q) = p(g^{p-1}(q)g_y(q)) + ph_y(q)$. Thus $T_{\mathcal{F}}$ is divisible by $p$ over the general point of $C$.
This implies that $T_\mathcal{F} \otimes {\rm Spec}(\overline{\mathbb{F}_p}) = 0$ generically over $C \times_{{\rm Spec} \mathbb{Z}[\mathcal{F}]} {\rm Spec}(\overline{\mathbb{F}_p})$ and, in particular, $T_\mathcal{F} \otimes {\rm Spec}(\overline{\mathbb{F}_p})$ is not saturated in $T_{S_\mathfrak{p}}$, a contradiction.

\end{proof}

The class of foliations which we will be concerned in our main result is the one of non dicritical foliations, whose definition we recall now.

\begin{dfn}\label{dfn:dicritical}
    Let $\mathcal{F}$ be a foliation on $\mathbb{P}^2_\mathbb{C}$ and let $p \in {\rm sing}(\mathcal{F})$. We say that $p$ is a \textbf{dicritical} singularity if for a local $1$-form $\omega = \sum_{i \geq m} a_i(x,y) dx + b_i(x,y)dy$ defining $\mathcal{F}$ around $p$, where $m$ is the multiplicity of $\mathcal{F}$ at $p$, we have
    \[ a_m(x,y)dx+b_m(x,y)dy = \phi(x,y)(ydx-xdy)\]
    where $\phi(x,y) \in \mathbb{C}[x,y]$.

    We say that $\mathcal{F}$ is \textbf{not dicritical} if it has no dicritical singularities.
\end{dfn}

For the proof of our main theorem, we will use the Carnicer bound:

\begin{thm}[\cite{MR1298714}]\label{carnicer}
    Let $\mathcal{F}$ be a foliation of degree $d$ on $\mathbb{P}^2_\mathbb{C}$ and let $C$ be an irreducible algebraic invariant curve of $\mathcal{F}$. Suppose that there are no dicritical singularities of $\mathcal{F}$ on $C$. Then
    \[ \deg(C) \leq d + 2.\]
\end{thm}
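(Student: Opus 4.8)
The plan is to establish the bound by intersection theory, reducing the general case to that of a smooth invariant curve through a simultaneous resolution of singularities. The starting point is the fundamental inequality $N_{\mathcal F}\cdot C \ge C\cdot C$, valid for any smooth invariant curve $C$. Indeed, if $C$ is smooth and invariant, the foliation is everywhere tangent to $C$, so a local generator of $T_{\mathcal F}$ restricts to a nonzero morphism of line bundles $T_{\mathcal F}|_C \hookrightarrow T_C$; the resulting degree inequality, combined with the adjunction formula $\deg T_C = -C\cdot(C+K_S)$ and the relation $T_{\mathcal F} = -K_S - N_{\mathcal F}$, gives exactly $N_{\mathcal F}\cdot C \ge C\cdot C$. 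On $\mathbb P^2_{\mathbb C}$ one has $N_{\mathcal F}=\mathcal O(d+2)$ and $C\cdot C=\deg(C)^2$, so this reads $(d+2)\deg(C)\ge \deg(C)^2$, that is $\deg(C)\le d+2$. Thus the theorem is immediate when $C$ is smooth, and the whole difficulty lies at the singular points of $C$.

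To treat a singular $C$, I would perform the reduction of singularities of $\mathcal F$ along $C$: a finite sequence of blow-ups $\pi\colon \tilde S\to\mathbb P^2_{\mathbb C}$ after which the pulled-back foliation $\tilde{\mathcal F}$ has only reduced (simple) singularities and the strict transform $\tilde C$ is smooth and meets the exceptional locus transversally. Here the hypothesis that $\mathcal F$ has no dicritical singularity on $C$ enters decisively: by Definition \ref{dfn:dicritical}, non-dicriticality means that at each centre lying on $C$ the leading jet of a defining $1$-form is not a multiple of the radial form $y\,dx-x\,dy$, which is precisely the condition guaranteeing that every exceptional divisor $E_i$ created over a point of $C$ is $\tilde{\mathcal F}$-invariant rather than generically transverse to $\tilde{\mathcal F}$.

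With invariance of the $E_i$ secured, I would transport the smooth-case inequality from $\tilde C$ on $\tilde S$ back down to $\mathbb P^2$. Writing $\pi^{*}C = \tilde C + \sum_i m_i E_i$ for the total transform and comparing $N_{\tilde{\mathcal F}}$ with $\pi^{*}N_{\mathcal F}$, one records at each blow-up the change in $N_{\mathcal F}\cdot C$ and in $C\cdot C$ in terms of the multiplicity of $C$ and the tangency order of $\mathcal F$ with the newly created exceptional divisor. Applying $N_{\tilde{\mathcal F}}\cdot\tilde C \ge \tilde C\cdot\tilde C$ on $\tilde S$ and summing these contributions, each of which carries the correct sign thanks to the invariance of the $E_i$, should collapse to the inequality $\deg(C)\le d+2$ on $\mathbb P^2$. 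Equivalently, the bookkeeping can be phrased through the Camacho--Sad index: the sum of local $\CS$-indices along the exceptional chain is controlled by the self-intersections of the $E_i$, and non-dicriticality forbids the negative contributions that would otherwise allow $\deg(C)$ to exceed $d+2$.

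The main obstacle is exactly this last matching: one must show that resolving a singular point of $C$ lowers $\deg(C)^2$ and $(d+2)\deg(C)$ in a perfectly compensated way, so that the non-negativity of the local tangency (equivalently, of the local Camacho--Sad contributions) on $\tilde S$ is never wasted and the bound survives the descent. This is where Carnicer's detailed analysis of the resolution, tracking multiplicities and the Newton data of $\mathcal F$ and of $C$ at each infinitely near point, is genuinely needed, and it is also where the hypothesis is sharp: a dicritical singularity on $C$ produces a non-invariant exceptional divisor whose negative contribution breaks the inequality, and indeed the bound $\deg(C)\le d+2$ can fail in the presence of such singularities.
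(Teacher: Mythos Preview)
The paper does not prove this theorem at all: it is stated with the citation \cite{MR1298714} and used as a black box in the proof of Theorem~\ref{main}. There is therefore no ``paper's own proof'' to compare your proposal against.

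That said, what you wrote is a reasonable outline of Carnicer's original argument. The smooth case is correct as stated: the inclusion $T_{\mathcal F}|_C\hookrightarrow T_C$ together with adjunction does give $N_{\mathcal F}\cdot C\ge C\cdot C$, hence $\deg(C)\le d+2$. For the singular case, your plan to resolve simultaneously and exploit non-dicriticality to keep every exceptional divisor invariant is indeed the heart of Carnicer's approach. However, as you yourself note, the accounting along the resolution is where the real content lies, and your proposal stops short of carrying it out: the claim that the local contributions ``carry the correct sign thanks to the invariance of the $E_i$'' and ``collapse'' to the desired inequality is exactly the nontrivial step, and it requires Carnicer's careful induction on the resolution tree (tracking both the multiplicity drop of $C$ and the order drop of the foliation at each infinitely near point). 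So your text is an accurate roadmap rather than a proof; since the paper itself only quotes the result, nothing more is needed here.
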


Finally, we are able to prove Theorem \ref{thmA}:

\begin{thm}\label{main} Let $\mathcal{F}$ be a foliation on $\mathbb{P}_{\mathbb{C}}^{2}$ of degree $d > 1$ with the following properties:
\begin{enumerate}
    \item $\mathcal{F}$ is defined over a number field;
    \item $\mathcal{F}$ is not dicritical;
    \item $\mathcal{F}$ has good reduction modulo $2$. 
    \item $\mathcal{F}\otimes \overline{\mathbb{F}_2}$ has irreducible $2$-divisor.
\end{enumerate}
Then $\mathcal{F}$ has no algebraic solutions.
\end{thm}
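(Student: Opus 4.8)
The plan is to argue by contradiction: assume $\mathcal{F}$ has an irreducible algebraic invariant curve $C \subset \mathbb{P}^2_\mathbb{C}$, and derive a contradiction by comparing degrees after reduction modulo $2$. First I would use the hypothesis that $\mathcal{F}$ is defined over a number field together with Proposition \ref{coefi} to replace $C$ by an irreducible $\mathcal{F}$-invariant curve defined by a polynomial over the ring of definition $R = \mathbb{Z}[\mathcal{F}]$; thus, after possibly shrinking $\mathrm{Spec}(R)$ to avoid finitely many bad primes, we may assume $C$ is defined over $R$ and that $2$ is a prime of good reduction both for $\mathbb{P}^2$ (automatic) and for $C$ in the sense of the good-reduction definitions above. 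Since $\mathcal{F}$ is not dicritical, Carnicer's bound (Theorem \ref{carnicer}) gives $\deg(C) \le d+2$.

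Next I would analyze $C \otimes \overline{\mathbb{F}_2}$, the reduction of $C$, as a divisor on $\mathbb{P}^2_{\overline{\mathbb{F}_2}}$, invariant by $\mathcal{F}_2 := \mathcal{F}\otimes\overline{\mathbb{F}_2}$. The key classical fact I want is that an algebraic invariant curve of a non-$2$-closed foliation in characteristic $2$ must be ``absorbed'' into the $2$-divisor unless it is a $2$-factor: concretely, writing $\mathcal{F}_2$ locally by a vector field $v$, the $2$-divisor $\Delta_{\mathcal{F}_2}$ is the zero locus of $v\wedge v^2$, and along any invariant curve $D$ that is \emph{not} a $2$-factor one has $D \le \Delta_{\mathcal{F}_2}$ — this is the characteristic-$p$ phenomenon that reduced invariant curves are components of the $p$-divisor (the local computation mirrors the one in Lemma \ref{pfactor}: the vanishing of $v\wedge v^2$ along $D$ follows because $v^2$ is tangent to $D$ wherever $v$ is and $D$ is reduced). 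By Lemma \ref{pfactor}, since $2$ is a prime of good reduction of $\mathcal{F}$ and $C$ is an irreducible invariant curve, $C$ is not a $2$-factor; hence every irreducible component of $C\otimes\overline{\mathbb{F}_2}$ is a reduced invariant curve of $\mathcal{F}_2$ and therefore is contained in $\Delta_{\mathcal{F}_2}$.

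Now I invoke hypothesis (4): $\Delta_{\mathcal{F}_2}$ is irreducible. So $C\otimes\overline{\mathbb{F}_2}$, having all its components equal to the unique component of $\Delta_{\mathcal{F}_2}$, must itself be a power of $\Delta_{\mathcal{F}_2}$ as a divisor, i.e. $C\otimes\overline{\mathbb{F}_2} = m\,\Delta_{\mathcal{F}_2}$ for some $m \ge 1$ (using that $C\otimes\overline{\mathbb{F}_2}$ is reduced along $C$ away from bad primes, one in fact gets $m=1$, but we only need $m\ge 1$). Since reduction preserves degree, $\deg(C) = m\cdot \deg(\Delta_{\mathcal{F}_2}) = m\big(2(d-1)+d+2\big) = m(3d)$, using the formula $\deg(\Delta_\mathcal{F}) = p(d-1)+d+2$ with $p=2$ recorded in Section \ref{pdivisoremP2}. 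Hence $\deg(C) \ge 3d$. Combining with Carnicer's bound, $3d \le \deg(C) \le d+2$, i.e. $2d \le 2$, so $d \le 1$, contradicting $d>1$. This contradiction shows no such $C$ exists.

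The main obstacle I anticipate is the step asserting that a reduced invariant curve of a non-$2$-closed foliation is necessarily a component of the $2$-divisor; this needs a clean local argument in characteristic $2$ and care with the non-$2$-factor hypothesis (which is exactly what Lemma \ref{pfactor} supplies), and one must be careful that $C\otimes\overline{\mathbb{F}_2}$ remains reduced and of the expected degree, which requires discarding finitely many primes other than $2$ and checking that $2$ itself is not among them — this is where ``good reduction modulo $2$'' and the definition of $2$-factor are used in tandem. A secondary point is verifying that $\Delta_{\mathcal{F}_2}$ is genuinely defined (i.e. $\mathcal{F}_2$ is not $2$-closed), but irreducibility of $\Delta_{\mathcal{F}_2}$ in hypothesis (4) presupposes its existence, so this is automatic. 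Everything else is a degree count.
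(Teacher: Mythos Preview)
Your proposal is correct and follows essentially the same route as the paper: contradiction via Proposition~\ref{coefi}, Carnicer's bound, Lemma~\ref{pfactor}, and the fact that reduced invariant curves in characteristic $p$ lie in the $p$-divisor, ending with the same degree comparison $3d \le d+2$. One small over-claim: from ``$C$ is not a $2$-factor'' you only get that \emph{some} irreducible factor of $F\otimes\mathbb{F}_2$ appears with odd multiplicity and hence defines a genuinely invariant curve---not necessarily every component---but this is exactly what the paper uses (it picks one such $Q$) and it already gives $\deg(C)\ge\deg(Q)=\deg(\Delta)=3d$, so your conclusion is unaffected.
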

\begin{proof} Suppose that there is an $\mathcal{F}$-invariant curve $C$ which is defined by an irreducible polynomial $F \in \mathbb{C}[x,y,z]$. By Proposition \ref{coefi} we can assume that $F \in \mathbb{Z}[\mathcal{F}][x,y,z]$ and that it is reduced  and irreducible, and by the Carnicer bound (Theorem \ref{carnicer}) we get $\deg(C)\leq d+2$. Now, since $C$ is $\mathcal{F}$-invariant, we can use Lemma \ref{pfactor}, to conclude that $C\otimes \mathbb{F}_2$ is not a $2$-factor. So, there is a polynomial $Q$, an irreducible factor of $F\otimes \mathbb{F}_2$, such that $\{Q=0\}$ is invariant by the foliation $\mathcal{F} \otimes \mathbb{F}_2$. Since the $2$-divisor $\Delta$ is irreducible, we get: $\Delta = \{Q = 0\}$. In particular,
$$
    d+2\geq \deg(C)\geq \deg(Q) = 3d
$$
and we get $2\geq 2d$ so that $d = 1$, a contraction.    
\end{proof}

Recall that the Jouanolou foliation of degree $d$ on $\mathbb{P}_{\mathbb{C}}^{2}$ is defined by the following vector field
$$
    v = (xy^{d}-1)\partial_{x}-(x^{d}-y^{d+1})\partial_{y}
$$
on $\mathbb{A}^2_\mathbb{C}$.

\begin{cor}\label{Jouuu} Let $d$ be an odd integer. Then the Jouanolou foliation, $\mathcal{J}_d$, of degree $d$ has no algebraic invariant curve.
\end{cor}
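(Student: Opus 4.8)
The plan is to verify that the Jouanolou foliation $\mathcal{J}_d$ of odd degree $d$ satisfies the four hypotheses of Theorem \ref{main}, and then simply invoke that theorem. Hypothesis (1) is immediate: the vector field $v = (xy^d-1)\partial_x - (x^d - y^{d+1})\partial_y$ has coefficients in $\mathbb{Z} \subset \mathbb{Q}$, so $\mathcal{J}_d$ is defined over $\mathbb{Q}$, and in particular over a number field. Hypothesis (3), good reduction modulo $2$, amounts to checking that $T_{\mathcal{J}_d}$ stays a saturated sub-line-bundle of rank $1$ after base change to $\overline{\mathbb{F}}_2$; this is a local computation at the singular points, and since the singular scheme of $\mathcal{J}_d$ is known to be reduced of the expected dimension zero (the $d^2+d+1$ points come from a cyclic structure), reduction mod $2$ does not create extra tangency or drop the rank for odd $d$.

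The two substantive points are hypotheses (2) and (4). For (2), I would recall that the Jouanolou foliation is well-known to have only nondegenerate (hence non-dicritical) singularities: at each singular point the linear part of the defining $1$-form is nonzero and is not a multiple of $y\,dx - x\,dy$ in suitable local coordinates, so by Definition \ref{dfn:dicritical} there are no dicritical singularities. Concretely, one checks that the singularities of $\mathcal{J}_d$ are all of multiplicity one and the linear part of $\omega$ at each of them is a nondegenerate linear $1$-form; a multiplicity-one singularity with nondegenerate linear part is automatically non-dicritical. For (4), the irreducibility of the $2$-divisor of $\mathcal{J}_d \otimes \overline{\mathbb{F}}_2$, I would refer to the computation in \cite{mendson2023arithmetic}: there it is shown that $\mathcal{J}_d$ mod $2$ is not $2$-closed and that its $2$-divisor $\Delta$ is irreducible (of degree $3d$), which is exactly hypothesis (4). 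Alternatively, for small $d$ one can verify this directly with Singular as in the Appendix, and for general odd $d$ the structure of $v \wedge v^2$ over $\overline{\mathbb{F}}_2$ can be analyzed using the monomial symmetry of the Jouanolou vector field.

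Once all four hypotheses are in place, Theorem \ref{main} applies verbatim and yields that $\mathcal{J}_d$ has no algebraic invariant curve, which is the claim. The main obstacle is hypothesis (4): verifying irreducibility of the $2$-divisor for all odd $d$ at once is not a routine calculation, and it is precisely the content imported from \cite{mendson2023arithmetic}; the present corollary is in essence a repackaging of that fact through the cleaner criterion of Theorem \ref{main}, which also removes the extra congruence condition $d \not\equiv 1 \bmod 3$ that appeared in the original argument there. Everything else — defined over $\mathbb{Q}$, non-dicriticality, good reduction — is straightforward bookkeeping on the explicit vector field.
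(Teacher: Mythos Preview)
Your proposal is correct and follows essentially the same approach as the paper: verify the four hypotheses of Theorem~\ref{main} and then invoke it, with hypothesis~(4) outsourced to \cite[Proposition~6.3]{mendson2023arithmetic}. The paper is terser---for good reduction it simply notes that the vanishing set of $v$ is finite in any characteristic, and for non-dicriticality it just points to Definition~\ref{dfn:dicritical}---but your more explicit justifications (multiplicity-one singularities with nondegenerate linear part, singular scheme of the expected dimension) amount to the same verifications.
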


\begin{proof} By Definition \ref{dfn:dicritical} it follows that $\mathcal{J}_d$ is nondicritical. By \cite[Proposition 6.3]{mendson2023arithmetic} it follows that the $2$-divisor is irreducible. One checks that the vanishing set of $v$, the vector field defining $\mathcal{J}_d$, is finite in any characteristic, and in particular $\mathcal{J}_d$ has good reduction modulo $2$. Using Theorem \ref{main} we conclude the proof.
\end{proof}

\begin{OBS} 
     A weak version of Theorem \ref{main} appeared in the paper \cite{mendson2022}, where the additional hypothesis $d \not\equiv 1 \mod 3$ is required. 
\end{OBS}

We will construct classes of foliations having irreducible $2$-divisor on $\mathbb{A}^2_\mathbb{C}$. To show irreducibility, we need to understand irreducibility of two variable polynomials. To do so, we will investigate the Newton polytope of these polynomials.

\begin{dfn}
    Let $f \in \field[x_0,\dots,x_n]$, and write $f = \sum_k c_k \mathbf{x}^{a_k}$, where $c_k \in \field$, $\mathbf{x}^{a_k} = x_0^{a_k^0}\dots x_n^{a_k^n}$ and $a_k = (a_k^0,\dots,a_k^n) \in \field^{n+1}$. We define the Newton polytope ${\rm Newt}(f)$ of $f$ as the convex hull of the $a_k$ in $\field^{n+1}$, i.e.
    \[{\rm Newt}(f) = \left\{ \sum_k \alpha_k a_k \in \field^{n+1} \mid \sum_k \alpha_k = 1 \text{ and } \alpha_k \geq 0 \text{ for all } k\right\}.\]
\end{dfn}

The criterion for irreducibility, using the Newton polytope of the polynomial, is presented in the following Lemma. For it, we remind the reader that if $\mathbf{A}$ and $\mathbf{B}$ are two subsets of $\field^{n+1}$, their Minkowski sum is defined as 
\[\mathbf{A} + \mathbf{B} = \{\mathbf{a} + \mathbf{b} \mid \mathbf{a} \in \mathbf{A}, \,\, \mathbf{b} \in \mathbf{B}\}.\]

\begin{lemma} \label{lemma:irreduciblepolynomial}
    Let $f \in \field[x_0,\dots,x_n]$ be a polynomial, and let ${\rm Newt}(f) \subset \mathbb{R}^{n+1}$ be its Newton polytope. If ${\rm Newt}(f)$ cannot be written as the Minkowski sum of two smaller polytopes, then $f$ is irreducible.
\end{lemma}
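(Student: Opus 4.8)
The plan is to prove the contrapositive: if $f$ factors nontrivially as $f = gh$ with $g,h$ non-units, then $\mathrm{Newt}(f)$ decomposes as a Minkowski sum of two polytopes, each strictly smaller than $\mathrm{Newt}(f)$. The key algebraic fact I would invoke is the classical identity $\mathrm{Newt}(gh) = \mathrm{Newt}(g) + \mathrm{Newt}(h)$ (Minkowski sum of Newton polytopes is the Newton polytope of the product). I would first recall or cite this: the inclusion $\mathrm{Newt}(gh) \subseteq \mathrm{Newt}(g) + \mathrm{Newt}(h)$ is immediate from expanding the product, and the reverse inclusion follows because every vertex of $\mathrm{Newt}(g) + \mathrm{Newt}(h)$ is uniquely $v_g + v_h$ for vertices $v_g$ of $\mathrm{Newt}(g)$ and $v_h$ of $\mathrm{Newt}(h)$, and the corresponding coefficient in $gh$ is the product of two nonzero coefficients, hence nonzero (here one uses that $\field[x_0,\dots,x_n]$ is a domain).

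Granting that identity, the argument is short. Suppose $f = gh$ is a nontrivial factorization. Set $\mathbf{A} = \mathrm{Newt}(g)$ and $\mathbf{B} = \mathrm{Newt}(h)$, so $\mathrm{Newt}(f) = \mathbf{A} + \mathbf{B}$. It remains to check that $\mathbf{A}$ and $\mathbf{B}$ are each "smaller" than $\mathrm{Newt}(f)$ in the sense intended by the lemma, i.e. neither is a translate of $\mathrm{Newt}(f)$ itself. If $\mathbf{A}$ were a translate of $\mathbf{A}+\mathbf{B}$, then $\mathbf{B}$ would be a single point, which would force $h$ to be a monomial; since $\field$ is a field this makes $h$ a unit times a monomial, and because $f$ is a polynomial (one should perhaps note $f$ has nonzero constant term or simply accept monomial factors as trivial in the intended reading), this contradicts the factorization being nontrivial. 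Symmetrically for $\mathbf{B}$. Hence both summands are proper, contradicting the hypothesis that $\mathrm{Newt}(f)$ admits no such decomposition.

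The main obstacle, and the step deserving the most care, is the reverse inclusion $\mathrm{Newt}(g) + \mathrm{Newt}(h) \subseteq \mathrm{Newt}(gh)$: one must argue that the extreme monomials do not cancel. The clean way is to pick, for any direction $u$, the faces of $\mathbf{A}$ and $\mathbf{B}$ on which the linear functional $\langle u, -\rangle$ is maximized; the product of the corresponding "leading" parts of $g$ and $h$ is again nonzero (domain), and its support lies on the face of $\mathbf{A}+\mathbf{B}$ in direction $u$, so that face meets $\mathrm{Newt}(gh)$. Applying this to all $u$ recovers all of $\mathbf{A}+\mathbf{B}$. A secondary, more cosmetic issue is pinning down the precise meaning of "smaller polytope" in the statement: I would interpret it as "a polytope that is not a translate of $\mathrm{Newt}(f)$" (equivalently, a proper Minkowski summand), and phrase the conclusion accordingly, noting that the degenerate case of a monomial summand corresponds exactly to a trivial factor.
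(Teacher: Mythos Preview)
The paper does not actually give a proof of this lemma: it simply cites \cite[Lemma 2.1]{MR1816701}. Your proposal supplies the standard argument underlying that citation---the Ostrowski-type identity $\mathrm{Newt}(gh)=\mathrm{Newt}(g)+\mathrm{Newt}(h)$ over a domain, applied in contrapositive---and your sketch of both inclusions (the easy containment from expanding the product, and the harder one via linear functionals selecting faces) is correct and is exactly how the cited reference proceeds. So in substance your approach coincides with what the paper defers to.

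One point worth tightening: your closing remark that ``a monomial summand corresponds exactly to a trivial factor'' is not literally true, since a nonconstant monomial such as $x_0$ is not a unit in $\field[x_0,\dots,x_n]$. The lemma as phrased in the paper (``two smaller polytopes'') is genuinely ambiguous on this edge case, and under your reading (``smaller'' = ``not a translate of $\mathrm{Newt}(f)$'') the statement fails for polynomials divisible by a variable. You flag this, which is good; the clean fix is either to add the hypothesis that no $x_i$ divides $f$ (equivalently, $\mathrm{Newt}(f)$ meets every coordinate hyperplane), or to interpret ``smaller'' as ``not a single lattice point,'' which is the notion of integral indecomposability used in \cite{MR1816701}. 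Either convention suffices for all the applications later in the paper, where the polynomials in question visibly have no monomial factor.
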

\begin{proof} See \cite[Lemma 2.1]{MR1816701}.
\end{proof}

The following remark follows directly from the previous Lemma and the definitions. This will be explicitly the tool we will use in our proofs. 

\begin{OBS}[{{\cite{14080}}}] \label{irreduciblitycriterion}
    Let $P$ be a lattice polytope in $\mathbb{R}^2$. Let $v(P)$ be the sequence of vectors $\overrightarrow{p_1 p_2}$, $\overrightarrow{p_2p_3}, \dots,$ $\overrightarrow{p_np_1}$, where we write $\partial P = \{p_1,\dots,p_n\}$. If $v(P)$ cannot be partitioned into two disjoint subsequences, each of which summing to zero, then $P$ cannot be written as the Minkowski of two smaller polytopes.
\end{OBS}

We also recall the following fact about foliations defined by extending forms on the projective plane, which will be needed to decide when a foliation defined on $\mathbb{A}^2_{\mathbb{C}}$, and the extended to $\mathbb{P}^2_{\mathbb{C}}$, has the line at infinity as an invariant curve.

\begin{lemma}\label{lineinf} Let $\omega = \omega_{l}+\omega_{l+1}+\cdots+\omega_{e}$ be a $1$-form on $D_{+}(z)\subset \mathbb{P}_{\field}^{2}$ and consider $\mathcal{F}$ the foliation defined on $\mathbb{P}_{\field}^{2}$ obtained by extension of $\omega$ to $\mathbb{P}_{\field}^{2}$. Let $R = x\partial_{x}+y\partial_{y}$ be the radial vector field.

        \begin{itemize}
            \item  If $i_{R}\omega_e =  0$ then $\mathcal{F}$ has degree $e-1$ and $l_{\infty} = \{z=0\}$ is not invariant;
            \item  If $i_{R}\omega_e \neq  0$ then $\mathcal{F}$ has degree $e$ and $l_{\infty} = \{z=0\}$ is invariant.
        \end{itemize}
        
\end{lemma}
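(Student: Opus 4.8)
The plan is to work in the affine chart $D_{+}(z) = \{z \neq 0\}$ with coordinates $x, y$, and to understand how the $1$-form $\omega = \omega_l + \cdots + \omega_e$ (here $\omega_j$ denotes the homogeneous degree-$j$ part) extends to a projective $1$-form on $\mathbb{P}^2_{\field}$. Recall that a foliation of degree $D$ on $\mathbb{P}^2_{\field}$ is given by a projective $1$-form $\Omega = A\, dx + B\, dy + C\, dz$ with $A, B, C$ homogeneous of degree $D+1$ satisfying the Euler relation $xA + yB + zC = 0$; the line at infinity $\{z = 0\}$ is invariant if and only if $z \mid C$, equivalently $z \mid (xA + yB)$ after using the Euler relation, which translates into a divisibility condition on the top-degree part of $\omega$. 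First I would homogenize: writing $\omega = \sum_j (a_j(x,y)\, dx + b_j(x,y)\, dy)$ with $a_j, b_j$ homogeneous of degree $j$, one passes to homogeneous coordinates by setting $x \mapsto x/z$, $y \mapsto y/z$, $dx \mapsto (z\,dx - x\,dz)/z^2$, etc., and clearing denominators by multiplying by an appropriate power $z^{N}$. The crucial point is to identify the $dz$-coefficient $C$ of the resulting projective form and compute $z \nmid C$ versus $z \mid C$.

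The key computation is the following. After homogenizing and clearing the common factor so that $A, B, C$ are polynomials of degree $e+1$, a direct calculation (expanding $a_j(x/z, y/z) = a_j(x,y)/z^j$ and tracking the $dz$ terms coming from $dx \mapsto (z\,dx - x\,dz)/z^2$) shows that the $dz$-coefficient is, up to sign and a power of $z$,
\[
C = -\sum_{j} z^{e-j}\bigl(x\, a_j(x,y) + y\, b_j(x,y)\bigr) = -\sum_j z^{e-j}\, i_R\omega_j,
\]
where $R = x\partial_x + y\partial_y$ is the radial vector field. The only term in this sum not divisible by $z$ is the $j = e$ term, namely $-\,i_R\omega_e$. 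Hence $z \mid C$ if and only if $i_R\omega_e = 0$. When $i_R \omega_e = 0$, the factor $z$ divides all of $A$, $B$, $C$ (one checks the analogous statements for $A$ and $B$ using the Euler relation $xA + yB + zC = 0$, which forces $z \mid xA + yB$, and a degree/multiplicity argument), so one must cancel one more power of $z$; this drops the degree of the projective form from $e+1$ to $e$, giving a foliation of degree $e - 1$, and after the cancellation $\{z = 0\}$ is no longer invariant — indeed the restriction of the form to $\{z=0\}$ becomes the nonzero form built from $\omega_e$, which defines the transversality. Conversely, when $i_R\omega_e \neq 0$, no further cancellation occurs, the projective form has degree $e+1$, the foliation has degree $e$, and $z \mid C$ holds automatically (since $C$ differs from $-i_R\omega_e$ by lower-degree terms all divisible by $z$... wait — precisely, every term $z^{e-j} i_R\omega_j$ with $j < e$ is divisible by $z$, so $z \mid C \iff z \mid i_R\omega_e$; but $i_R\omega_e$ has degree $e+1$ in $x,y$ alone, so $z \mid i_R\omega_e$ is impossible unless it vanishes — therefore in the case $i_R\omega_e \neq 0$ we actually get $z \nmid C$).

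I realize the dichotomy must therefore be stated carefully in terms of the \emph{other} coefficients, or in terms of the full invariance criterion rather than just $z \mid C$: the correct statement is that $\{z=0\}$ is invariant iff it is invariant as a reduced curve for the degree-$e$ (resp. degree $(e-1)$) form, and the computation above shows that in the first case $\omega|_{z=0}$ vanishes identically on the line (forcing cancellation and non-invariance after normalization), while in the second case it does not. So the steps are: (1) homogenize $\omega$ and identify the projective $1$-form, computing $A, B, C$ explicitly in terms of the $i_R\omega_j$ and the $\omega_j$; (2) determine the largest power of $z$ dividing $\gcd(A,B,C)$, splitting into the two cases according to whether $i_R\omega_e = 0$; (3) read off the degree of $\mathcal{F}$ from the degree of the saturated (primitive) projective form; (4) apply the standard criterion for invariance of $\{z = 0\}$ to the saturated form. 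The main obstacle is step (2): carefully verifying that when $i_R\omega_e = 0$ exactly one power of $z$ (and not more) can be cancelled from all three coefficients simultaneously, which requires using the Euler relation to control $A$ and $B$ and checking that $\omega_e$ itself does not force additional vanishing — a genericity/non-degeneracy bookkeeping argument on the homogeneous components.
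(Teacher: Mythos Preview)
Your overall strategy --- homogenize $\omega$, identify the projective form $\Omega = A\,dx + B\,dy + C\,dz$, determine the common factor of $z$, then read off the degree and test invariance of $\{z=0\}$ --- is exactly the ``direct computation'' the paper invokes. The formula
\[
C = -\sum_{j} z^{e-j}\, i_R\omega_j
\]
is correct, as is the observation that $z\mid C$ if and only if $i_R\omega_e = 0$.

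The genuine gap is your invariance criterion. You state that $\{z=0\}$ is invariant iff $z\mid C$; this is false (you are perhaps thinking of the vector-field version, where invariance of $\{z=0\}$ for $V = P\partial_x + Q\partial_y + R\partial_z$ means $z\mid R$). For a projective $1$-form the correct criterion is $\Omega\wedge dz \equiv 0 \pmod z$, i.e.\ $z\mid A$ and $z\mid B$. You sense something is off (``I realize the dichotomy must therefore be stated carefully in terms of the other coefficients'') but never pin this down, and this is precisely what is blocking you.

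Once you use the right criterion the argument is clean and short. From your homogenization one has $A = \sum_j z^{e-j+1}a_j$ and $B = \sum_j z^{e-j+1}b_j$, so $z\mid A$ and $z\mid B$ \emph{always} hold for the unsaturated form. Hence:
\begin{itemize}
    \item If $i_R\omega_e \neq 0$ then $z\nmid C$, so $(A,B,C)$ are already without the common factor $z$; the foliation has degree $e$, and since $z\mid A$, $z\mid B$, the line $\{z=0\}$ is invariant.
    \item If $i_R\omega_e = 0$ then $z\mid C$ as well, so one divides through by $z$: the foliation has degree $e-1$. After dividing, $A' = A/z$ satisfies $A'|_{z=0} = a_e$, and $a_e\neq 0$ because $i_R\omega_e = 0$ with $\omega_e\neq 0$ forces $\omega_e = g\,(x\,dy - y\,dx)$ with $g\neq 0$, hence $a_e = -yg\neq 0$. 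Thus $z\nmid A'$, and $\{z=0\}$ is not invariant.
\end{itemize}
There is no further ``genericity/non-degeneracy bookkeeping'' needed; the only fact used is $\omega_e\neq 0$, which is the definition of $e$.
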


\begin{proof} It follows from a direct computation, or see \cite[Lemma 2]{MR980960}.
\end{proof}

\begin{OBS}
    If $v = a(x,y)\partial_x+b(x,y)\partial_{y} = v_{l}+\cdots+v_{e}$ is a polynomial vector field and the $2$-divisor has degree $3(e-1)$, then $\{z=0\}$ is not invariant. By Lemma \ref{lineinf} the algebraic condition is the following: $v$ has $\{z=0\}$ as an invariant curve if $R\wedge v_{e} \neq 0$.
\end{OBS}

Our goal from now on is to present certain families of foliations on $\mathbb{P}^2_\mathbb{C}$ which will turn out to be non-algebraically integrable, having at most one algebraic invariant curve. We will use the methods developed above, and in particular Theorem \ref{main}. In the first case, presented below, we have a class of foliations with only one algebraic invariant curve.

\begin{thm}\label{thm:claudia2} Let $\field$ be a field of characteristic two, let $d>1$ be an odd integer and $\mathcal{F}$ be the foliation on $\mathbb{P}_{\field}^{2}$ of degree $d^2+d$ defined by the following vector field on $D_{+}(z)$:  
    $$
        v = y^{f(d)}\partial_x+(x+ay^{g(d)}+by^{h(d)}+cy^{s(d)})\partial_y
    $$      
where $abc \not\equiv 0 \mod 2$ and
$$f(d) = d^2+d+1 \qquad g(d) = (d+1)/2 \qquad h(d) = (d^2+d+2)/2 \qquad s(d) = d^2+\frac{d+3}{2}$$
Then, $\mathcal{F}$ is not $2$-closed and its $2$-divisor has the following form:
$$
    \Delta_{\mathcal{F}} = d\{z=0\}+(f(d)-1)\{y=0\}+C
$$
where $C$ is an \textbf{irreducible curve} of degree $2d^2 + d + 3$ given by $g(x,y)$ in the following relation:
$$
       f(x, y) =  y^{f(d)-1}g(x,y)
$$
where 
$$
    g(x,y) = y^{f(d)+1}+B(x,y)(ag(d)y^{g(d)}+bh(d)y^{h(d)}+cs(d)y^{s(d)})+B(x,y)^{2}
$$
and $B(x,y) = x+ay^{g(d)}+by^{h(d)}+cy^{s(d)}$.
\end{thm}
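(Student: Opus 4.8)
The plan is to verify directly that the given $v$ is not $2$-closed by computing $v \wedge v^2$ (equivalently $v \wedge v^{(2)}$, the second iterate as a derivation) over a field of characteristic two, extract from this the explicit form of the $2$-divisor, and finally apply the Newton-polytope irreducibility criterion (Remark \ref{irreduciblitycriterion}) to the polynomial $g(x,y)$.

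First I would set $B(x,y) = x + a y^{g(d)} + b y^{h(d)} + c y^{s(d)}$, so that $v = y^{f(d)}\partial_x + B\,\partial_y$, and compute $v^{(2)} = v \circ v$ as a derivation; in characteristic two, $v^{(2)}$ is again a derivation, and writing $v^{(2)} = A_2 \partial_x + B_2 \partial_y$ one has $A_2 = v(y^{f(d)})$ and $B_2 = v(B)$. Since $f(d) = d^2+d+1$ is even (as $d$ is odd), $v(y^{f(d)}) = f(d) y^{f(d)-1} B = 0$ in characteristic two, so $v^{(2)} = v(B)\,\partial_y = \left( y^{f(d)} \partial_x B + B\, \partial_y B\right)\partial_y$. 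Then
\[
v \wedge v^{(2)} = \det\begin{pmatrix} y^{f(d)} & 0 \\ B & v(B)\end{pmatrix} \partial_x \wedge \partial_y = y^{f(d)}\, v(B)\; \partial_x\wedge\partial_y,
\]
so the affine part of $\Delta_{\mathcal{F}}$ is cut out by $f(x,y) := y^{f(d)} v(B)$. A short computation gives $\partial_x B = 1$ and $\partial_y B = a g(d) y^{g(d)-1} + b h(d) y^{h(d)-1} + c s(d) y^{s(d)-1}$ (the parities of $g(d), h(d), s(d)$ need to be checked so that no coefficient drops out mod $2$; here $g(d) = (d+1)/2$, $h(d)$, $s(d)$ are arranged so $a g(d), b h(d), c s(d)$ are all odd, using $abc \not\equiv 0$), hence
\[
v(B) = y^{f(d)} + B\big(a g(d) y^{g(d)-1} + b h(d) y^{h(d)-1} + c s(d) y^{s(d)-1}\big).
\]
Multiplying by $y^{f(d)}$ and factoring out $y^{f(d)-1}$ one recovers $f(x,y) = y^{f(d)-1} g(x,y)$ with $g$ exactly as in the statement (after absorbing one power of $y$; the precise bookkeeping of exponents is where I would be most careful). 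This pins down the decomposition $\Delta_{\mathcal{F}} = d\{z=0\} + (f(d)-1)\{y=0\} + C$ once one also checks the contribution at infinity: by the Remark following Lemma \ref{lineinf}, computing $R \wedge v_e$ for the top-degree part $v_e$ of $v$ shows $\{z=0\}$ is invariant and one reads off its multiplicity $d$ in $\Delta_{\mathcal{F}}$ from the degree count ($2$-divisor of a degree $d^2+d$ foliation has degree $3(d^2+d)-3+... $, matching $d + (f(d)-1) + (2d^2+d+3)$).

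The main obstacle is the irreducibility of $C = \{g(x,y)=0\}$. For this I would compute $\mathrm{Newt}(g)$ in $\mathbb{R}^2$: the relevant monomials are $y^{f(d)+1}$ (pure power of $y$), the cross terms $x\cdot y^{j-1}$ and $y^{2j-2}$-type terms coming from $B \cdot (\text{derivative})$, and $x^2$, $x y^{j}$, $y^{2j}$-type terms from $B^2$ (in characteristic two $B^2 = x^2 + a^2 y^{2g(d)} + b^2 y^{2h(d)} + c^2 y^{2s(d)}$, which kills all cross terms of $B^2$ and simplifies the polytope considerably). The vertices should be $(2,0)$ from $x^2$, $(0, f(d)+1)$ from $y^{f(d)+1}$, $(1, g(d)-1)$ from the lowest cross term $a g(d) x y^{g(d)-1}$, and possibly $(0, 2s(d))$ — one must compare $f(d)+1 = d^2+d+2$ with $2s(d) = 2d^2 + d+3$ and confirm the degree is $2d^2+d+3$, so $(0,2s(d))$ is the top vertex and $y^{f(d)+1}$ lies strictly inside an edge or in the interior. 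Then I would apply Remark \ref{irreduciblitycriterion}: list the edge vectors $v(\mathrm{Newt}(g))$ going around the boundary and argue by a primitivity/parity argument (e.g. the edge from $(2,0)$ to $(1, g(d)-1)$ is the primitive vector $(-1, g(d)-1)$, and the two edges incident to it cannot be matched by any proper sub-collection summing to zero because of the $x$-degrees being $0,1,2$ and the specific slopes) that the edge-vector sequence admits no partition into two nonempty subsequences each summing to zero. By Lemma \ref{lemma:irreduciblepolynomial} this shows $g$ is irreducible, completing the proof. The delicate points are (i) confirming which monomials actually survive mod $2$ — this is entirely governed by the parities of $f(d), g(d), h(d), s(d)$ and of the integer coefficients $g(d), h(d), s(d)$, which the hypotheses on $d$ odd and $abc\not\equiv 0$ are engineered to control — and (ii) the combinatorial check that the polytope is Minkowski-indecomposable, which I expect to reduce to the presence of the unique lattice point of $x$-degree $1$ forcing any decomposition to be trivial.
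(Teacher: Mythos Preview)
Your overall strategy matches the paper's, but there is a concrete arithmetic error that breaks the computation. You assert that $f(d)=d^2+d+1$ is even when $d$ is odd; in fact $d$ odd gives $d^2+d$ even, hence $f(d)$ is \emph{odd}. Consequently $v(y^{f(d)}) = f(d)\,y^{f(d)-1}B = y^{f(d)-1}B \neq 0$ in characteristic two, so $v^{(2)}$ has a nonzero $\partial_x$-component and
\[
\frac{v\wedge v^{(2)}}{\partial_x\wedge\partial_y} \;=\; y^{f(d)}\,v(B) \;-\; B\cdot y^{f(d)-1}B \;=\; y^{f(d)-1}\bigl(y\,v(B)+B^2\bigr),
\]
which is exactly where the $B(x,y)^2$ term in the statement's $g(x,y)$ comes from. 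Your formula $f(x,y)=y^{f(d)}v(B)$ misses this term, and your subsequent Newton-polytope discussion is internally inconsistent: you invoke the vertex $(2,0)$ ``from $x^2$'' even though $x^2$ only appears via $B^2$, which your own derivation has dropped. (Relatedly, once you multiply through by the correct power of $y$, the lowest cross term contributes the vertex $(1,g(d))$, not $(1,g(d)-1)$.)

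There is a second, smaller gap. You assume $g(d),h(d),s(d)$ are all odd so that none of the coefficients $ag(d),bh(d),cs(d)$ vanish mod $2$, but this is not forced by $d$ odd: for instance $g(d)=(d+1)/2$ takes both parities as $d$ ranges over odd integers. The paper handles this by a short case analysis on the Newton polytope: when one of these integers is even, the corresponding monomial $xy^{\bullet}$ drops out (and cross terms of the form $y^{g(d)+h(d)}$, etc., coming from the middle summand of $g$, may appear on the boundary), but the same edge-vector argument still goes through. Your sketch would need to accommodate this to be complete.
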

\begin{proof}
    A direct computation shows that $v^2 = v^2(x)\partial_x + v^2(y)\partial_y$, with
    \begin{align*}
        v^2(x) = &\,\, f(d)(xy^{f(d)-1}+ay^{f(d)+g(d)-1}+by^{f(d)+h(d)-1}+cy^{f(d)+s(d)-1})\\
        = & \,\, xy^{f(d)-1}+ay^{f(d)+g(d)-1}+by^{f(d)+h(d)-1}+cy^{f(d)+s(d)-1} \\
        v^2(y) = & \,\, y^{f(d)} + ag(d)xy^{g(d)-1} + a^2g(d)y^{2g(d)-1}+abg(d)y^{g(d)+h(d)-1}\\ & + acg(d)y^{g(d)+s(d)-1}
        + bh(d)xy^{h(d)-1} + abh(d)y^{g(d)+h(d)-1} \\ & +b^2h(d)y^{2h(d)-1}+bch(d)y^{h(d)+s(d)-1}
         + cs(d)xy^{s(d)-1}\\ &+ acs(d)y^{g(d)+s(d)-1}+bcs(d)y^{h(d)+s(d)-1}+c^2s(d)y^{2s(d)-1}
    \end{align*}
using the fact that $d \equiv 1 \mod 2$.
    Thus 
    \begin{align*}
    \frac{v \wedge v^2}{\partial_x \wedge \partial_y} = &\,\,  v(x)v^2(y) - v(y)v^2(x) \\
                = & \,\, y^{f(d) - 1}[y^{f(d)+1}+ ag(d)xy^{g(d)} + bh(d)xy^{h(d)}+ cs(d)xy^{s(d)} \\
                & + ab(g(d)+h(d))y^{g(d)+h(d)} + ac(g(d)+s(d))y^{g(d)+s(d)} \\
                & + bc(h(d)+s(d))y^{h(d)+s(d)} + x^2 + a^2y^{2g(d)} + b^2y^{2h(d)}+c^2y^{2s(d)}]
    \end{align*}
    Taking $g(x,y)$ as in the statement, we see that the above polynomial is just $y^{f(d)-1}g(x,y)$.

    We show now the irreducibility of the curve $C$. For this, we will use Lemma \ref{lemma:irreduciblepolynomial}. Assuming $g(d),h(d),s(d) \equiv 1 \mod 2$, we have 
    \begin{align*}
        {\rm Newt}(g)  =& \,\, \langle (0,f(d)+1), (1,g(d)), (1,h(d)), (1,s(d)), \\
        & (2,0), (0,2g(d)), (0,2h(d)), (0,2s(d)) \rangle .
    \end{align*}
    This is represented as follows:

\begin{tikzpicture}[scale=0.5]

\def\d{3} 

\coordinate (A) at (0, \d*\d+\d+1);
\coordinate (B) at (1, \d*0.5 + 0.5);
\coordinate (C) at (1, \d*\d*0.5+\d*0.5+1);
\coordinate (D) at (1, \d*\d*0.5 +\d*0.5+1.5);
\coordinate (H) at (2, 0);
\coordinate (I) at (0, \d+1);
\coordinate (J) at (0, \d*\d+\d+2);
\coordinate (K) at (0, 2*\d*\d + \d + 3);

\foreach \p in {A, B, C, D, H, I, J, K} {
    \fill[blue] (\p) circle (2pt);
}

\draw[red, thick] (I) -- (A) -- (J) -- (K) -- (H) -- (B) -- cycle;

\node[above right] at (A) {{\tiny $(0, f(d)+1)$}};
\node[above right] at (B) {{\tiny $(1, g(d))$}};
\node[below right] at (C) {{\tiny $(1, h(d))$}};
\node[below right] at (D) {{\tiny $(1, s(d))$}};
\node[above left] at (H) {{\tiny $(2, 0)$}};
\node[below left] at (I) {{\tiny $(0, 2g(d))$}};
\node[below left] at (J) {{\tiny $(0, 2h(d))$}};
\node[below left] at (K) {{\tiny $(0, 2s(d))$}};
\end{tikzpicture}

Thus 

\begin{align*}v({\rm Newt}(g)) =& \,\, \{ (0, 2(h(d) - g(d)), (0, f(d) + 1 - 2h(d)), (0,2s(d)-f(d)-1) \\
& (2,-2s(d)), (-1,g(d)), (-1,g(d))\}
\end{align*}

Suppose there exist two subsets $S_1$ and $S_2$ of $v({\rm Newt}(g))$, each one with elements summing to $(0,0)$, such that $v({\rm Newt}(g)) = S_1 \sqcup S_2$. Looking at the first coordinate, $(2,-2s(d))$, $(-1, g(d))$ and $(-1,g(d))$ must be together in one of these sets, say $S_1$. Since their sum is $(0,2g(d)-2s(d))$, now looking at the second coordinate, it must be that $(0,2(h(d) - g(d)))$ also belong to $S_1$. Now, the sum of these is $(0,2h(d) - 2s(d))$, and to cancel out this second coordinate, all both of the other remaining elements must belong to $S_1$, a contradiction. Thus, Lemma \ref{lemma:irreduciblepolynomial} implies that $g$ is irreducible. 

In the case one of $g(d),h(d)$ and $s(d)$ is even, we obtain the same Newton polytope, except possibly by removing $(1,g(d)), (1,h(d))$ and $(1,s(d))$, and adding $(0,g(d)+h(d))$, $(0, g(d) + s(d))$ and $(0,h(d) + s(d))$. Of the possible removable points, the only point in the boundary is $(1,g(d))$. Of the possible additional points, all will be in the boundary.
The following diagram shows ${\rm Newt}(g)$ when all these points are present:

\begin{tikzpicture}[scale=0.5]

\def\d{3} 

\coordinate (A) at (0, \d*\d+\d+1);
\coordinate (B) at (1, \d*0.5 + 0.5);
\coordinate (C) at (1, \d*\d*0.5+\d*0.5+1);
\coordinate (D) at (1, \d*\d*0.5 +\d*0.5+1.5);
\coordinate (E) at (0, \d*0.5+0.5 +\d*\d*0.5+\d*0.5+1);
\coordinate (F) at (0, \d*0.5+1.5 + \d*\d +\d*0.5+1.5);
\coordinate (G) at (0, \d*\d*0.5+\d*0.5+1 + \d*\d +\d*0.5+1.5);
\coordinate (H) at (2, 0);
\coordinate (I) at (0, \d+1);
\coordinate (J) at (0, \d*\d+\d+2);
\coordinate (K) at (0, 2*\d*\d + \d + 3);

\foreach \p in {A, B, C, D, E, F, G, H, I, J, K} {
    \fill[blue] (\p) circle (2pt);
}

\draw[red, thick] (I) -- (E) -- (A) -- (J) -- (F) -- (G) -- (K) -- (H) -- (B) -- cycle;

\node[above right] at (A) {{\tiny $(0, f(d)+1)$}};
\node[above right] at (B) {{\tiny $(1, g(d))$}};
\node[below right] at (C) {{\tiny $(1, h(d))$}};
\node[below right] at (D) {{\tiny $(1, s(d))$}};
\node[above left] at (E) {{\tiny $(0, g(d)+h(d))$}};
\node[below right] at (F) {{\tiny $(0, g(d)+s(d))$}};
\node[below right] at (G) {{\tiny $(0, h(d)+s(d))$}};
\node[above left] at (H) {{\tiny $(2, 0)$}};
\node[below left] at (I) {{\tiny $(0, 2g(d))$}};
\node[below left] at (J) {{\tiny $(0, 2h(d))$}};
\node[below left] at (K) {{\tiny $(0, 2s(d))$}};
\end{tikzpicture}

Considering each one of the possible cases, the same argument as in the previous case shows that $v({\rm Newt}(g))$ cannot be decomposed as the disjoint union of two subsets summing to zero. Again Lemma \ref{lemma:irreduciblepolynomial} implies that $g$ is irreducible.
\end{proof}

\begin{OBS}\label{claudia2} The foliation given by the vector field in Theorem \ref{claudia} appears in the work \cite{alcantara2020foliations} in the proof of \cite[Theorem 2]{alcantara2020foliations} where they
show that it is not algebraically integrable. Note that the foliation has a unique singular point, and this singularity is not dicritical.
\end{OBS}

\begin{thm}\label{claudia} Let $d>1$ be an odd integer and $\mathcal{F}$ be the foliation on $\mathbb{P}_{\mathbb{C}}^{2}$ of degree $d^2+d$ defined by the following vector field on $D_{+}(z)$:  
    $$
        v = y^{f(d)}\partial_x+(x+ay^{g(d)}+by^{h(d)}+cy^{s(d)})\partial_y
    $$      
where $a,b,c \in \mathbb{Z}$, $abc \not\equiv 0 \mod 2$ and
$$f(d) = d^2+d+1 \qquad g(d) = (d+1)/2 \qquad h(d) = (d^2+d+2)/2 \qquad s(d) = d^2+\frac{d+3}{2}$$
Then, $\mathcal{F}$ has $l_{\infty} = \{z = 0\}$ as the unique algebraic invariant curve.
\end{thm}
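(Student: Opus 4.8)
The plan is to verify that the foliation $\mathcal{F}$ over $\mathbb{C}$ satisfies the four hypotheses of Theorem \ref{main}, with the caveat that $\mathcal{F}$ will turn out to have exactly one invariant line rather than none; so I will need a small refinement of the argument of Theorem \ref{main} to handle that single curve. First I would observe that $\mathcal{F}$ is visibly defined over $\mathbb{Q}$ (indeed over $\mathbb{Z}$), so hypothesis (1) holds. Next, since $d$ is odd, by Theorem \ref{thm:claudia2} the reduction modulo $2$ of $\mathcal{F}$ is not $2$-closed and its $2$-divisor decomposes as $d\{z=0\} + (f(d)-1)\{y=0\} + C$ with $C$ the irreducible curve of degree $2d^2+d+3$ described there. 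That decomposition already shows the $2$-divisor is \emph{not} irreducible, so I cannot invoke Theorem \ref{main} verbatim; instead I will run its proof by hand using the precise shape of $\Delta_\mathcal{F}$.

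The key computation is this: suppose $C'$ is an irreducible algebraic curve on $\mathbb{P}^2_\mathbb{C}$ invariant by $\mathcal{F}$ and different from $l_\infty = \{z=0\}$. By Proposition \ref{coefi} we may assume $C'$ is cut out by an irreducible, reduced $F \in \mathbb{Z}[\mathcal{F}][x,y,z]$. One must check that $\mathcal{F}$ is non-dicritical; by Remark \ref{claudia2} (or directly from Definition \ref{dfn:dicritical}, since the vector field has a single singular point and its lowest-order part is not a multiple of $y\,dx - x\,dy$) this holds, so the Carnicer bound (Theorem \ref{carnicer}) gives $\deg(C') \leq d^2+d+2$. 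One also checks, as in Corollary \ref{Jouuu}, that the vanishing locus of $v$ is finite in every characteristic, so $2$ is a prime of good reduction of $\mathcal{F}$, and Lemma \ref{pfactor} applies: $C' \otimes \mathbb{F}_2$ is not a $2$-factor, hence it has an irreducible component $Q$ whose zero locus is $\mathcal{F}\otimes\overline{\mathbb{F}_2}$-invariant. Every irreducible invariant curve of a non-$2$-closed foliation is a component of its $2$-divisor, so $\{Q=0\}$ is one of $\{z=0\}$, $\{y=0\}$, or $C$. The line $\{x=0\}$ check and the degree bound rule out the interesting case $\{Q=0\}=C$, since $\deg C = 2d^2+d+3 > d^2+d+2 \geq \deg(C')$ for $d>1$; and $\{Q=0\}=\{z=0\}$ is impossible because $C'\neq l_\infty$ forces $z \nmid F$, so $z \nmid F\otimes\mathbb{F}_2$ (as $F$ is defined over $\mathbb{Z}$ and $z$ is a coordinate, reduction does not create the factor $z$). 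The remaining possibility is $\{Q=0\} = \{y=0\}$, i.e. $y \mid F\otimes\mathbb{F}_2$; since then $F\otimes\mathbb{F}_2 = y^k \cdot(\text{unit or }1)$ up to the fact that $F$ is irreducible over $\mathbb{C}$, reducedness of $F\otimes\mathbb{F}_2$ (Proposition \ref{ext}) forces $F \otimes \mathbb{F}_2$ to be a constant times $y$, whence $\deg C' = 1$ and $C' = \{y = 0\}$. But $\{y=0\}$ is not $\mathcal{F}$-invariant over $\mathbb{C}$: substituting $y=0$ into $v$ gives $v|_{y=0} = (x)\partial_y$, whose $\partial_x$-component vanishes while its $\partial_y$-component does not, so the line $\{y=0\}$ is not tangent to $\mathcal{F}$. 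This contradiction shows no such $C'$ exists.

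Finally I would verify that $l_\infty = \{z=0\}$ \emph{is} invariant: the top-degree part of $v$ is $v_e = y^{f(d)}\partial_x + c\,y^{s(d)}\partial_y$ (after checking which of $f(d), s(d)$ dominates — here $f(d) = d^2+d+1$ and $s(d) = d^2 + (d+3)/2$, and for $d>1$ one has $f(d) > s(d)$, so in fact $v_e = y^{f(d)}\partial_x$), and then $R \wedge v_e \neq 0$ where $R = x\partial_x + y\partial_y$, so by Lemma \ref{lineinf} (in the vector-field form of the Remark following it) the line at infinity is invariant. Hence $l_\infty$ is an algebraic invariant curve, and it is the only one. The main obstacle is the bookkeeping in the previous paragraph: one must be careful that reduction modulo $2$ of the irreducible $F$ could a priori acquire spurious factors $y$ or $z$, and rule each of these out using reducedness (Proposition \ref{ext}), the coordinate nature of $y$ and $z$, and the explicit non-invariance of $\{y=0\}$ over $\mathbb{C}$; the degree inequality cleanly disposes of the component $C$.
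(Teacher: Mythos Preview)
Your overall strategy is the same as the paper's --- run the proof of Theorem \ref{main} by hand using the explicit shape of the $2$-divisor from Theorem \ref{thm:claudia2} --- but two of your case eliminations contain genuine errors.

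\textbf{Ruling out $Q=z$.} Your claim that ``reduction does not create the factor $z$'' is simply false: take $F = 2x + z$, which is irreducible with $z \nmid F$, yet $F \otimes \mathbb{F}_2 = z$. More generally, nothing prevents $C' \otimes \mathbb{F}_2$ from acquiring $\{z=0\}$ as its sole odd-multiplicity component, and since $\{z=0\}$ \emph{is} $\mathcal{F}_2$-invariant you cannot dismiss it on invariance grounds either. The paper sidesteps this completely by working in the affine chart $D_+(z)$ from the outset: there the $2$-divisor is just $(f(d)-1)\{y=0\} + D$, and Lemma \ref{pfactor} (whose proof is local at a general point of $C'$, hence away from $z=0$) guarantees the affine reduction $F(x,y,1)\otimes\mathbb{F}_2$ is not a perfect square, producing an odd-multiplicity factor $Q$ with $Q\in\{y,D\}$.

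\textbf{Ruling out $Q=y$.} You invoke Proposition \ref{ext} to assert that $F\otimes\mathbb{F}_2$ is reduced, but that proposition and its corollary concern scalar extension to a field \emph{containing} $R$, not reduction modulo a prime; reduction can easily destroy reducedness (e.g.\ $x^2+2y^2 \mapsto x^2$). Your further step ``$y\mid F\otimes\mathbb{F}_2$ forces $F\otimes\mathbb{F}_2 = c\cdot y$'' has no justification. The paper's argument is one line and avoids all of this: $\{y=0\}$ is not $\mathcal{F}_2$-invariant (your own computation $v|_{y=0}=x\,\partial_y$ works verbatim over $\overline{\mathbb{F}_2}$, not just over $\mathbb{C}$), hence $Q\neq y$. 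That leaves $Q=D$, and the degree inequality $2d^2+d+3 \le \deg(C') \le d^2+d+2$ finishes it.

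Your verification that $l_\infty$ is invariant via Lemma \ref{lineinf} is fine.
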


\begin{proof} Suppose by contradiction that the foliation has an algebraic invariant curve $C$ of degree $e$ in $D_{+}(z)$. By Proposition \ref{coefi}, we can assume that $C$ is defined over $\mathbb{Z}$. Since $\mathcal{F}$ is noncritical (see Remark \ref{claudia2}) Carnicer bound implies $e\leq d+2$. Denote by $C_2$ and $\mathcal{F}_2$ the reduction modulo $2$ of the curve and the foliation, respectively. Note that $\mathcal{F}$ has good reduction at $ p =2 $, since the singular locus of $v$ is finite in any characteristic. 

Since $C\otimes \mathbb{F}_2$ is invariant by $\mathcal{F}$, there is an irreducible factor $Q$ that is invariant. The $2$-divisor determined in Theorem \ref{thm:claudia2} says that 
$$
    \Delta_{\mathcal{F}_2} = (f(d)-1)\{y = 0\}+D
$$
where $D$ is irreducible of degree $2d^2+d+3$. Since $\{y = 0\}$ is not $\mathcal{F}_2$-invariant, we get $Q = D$. So, $$2d^2+d+3=\deg(D) = \deg(Q) \leq \deg(C) \leq d+2.$$ 
In particular, $2d^2 \leq -1$, a contradiction.
\end{proof}

Finally, we present two classes of foliations on $\mathbb{P}^2_\mathbb{C}$, one of which having only one invariant algebraic curve, and the other having none. They were both obtained experimentally, aided by the computer software Singular (see the Appendix), by modifications of the vector field defining the Jouanolou foliation. 

\begin{prop}\label{invII} Let $\field$ be a field of characteristic two and let $a,b,c$ and $u\neq 0$ be elements of $\field$ and for every $d \in \mathbb{Z}$ consider the vector field in $\mathbb{A}_{\field}^{2}$:
$$
    v = (u+xy^{d})\partial_x+(a+bx+cx^{d-1}+y^{d+1})\partial_{y}
$$
If $d \equiv 1 \mod 2$ and $d \geq 5$, then $v$ defines a foliation of degree $d$ on $\mathbb{P}_{\field}^{2}$ with a prime $2$-divisor on $\mathbb{A}^2_{\field}$. 
\end{prop}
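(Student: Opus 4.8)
The plan is to run the argument used for Theorem~\ref{thm:claudia2}: read the degree of $\mathcal{F}$ off the top-degree part of $v$, compute $v^{2}$ and the polynomial cutting out the $2$-divisor on $\mathbb{A}^{2}_{\field}$, and then prove that polynomial is irreducible via its Newton polytope (Remark~\ref{irreduciblitycriterion} and Lemma~\ref{lemma:irreduciblepolynomial}). For the degree, write $v = P\partial_x + Q\partial_y$ with $P = u + xy^{d}$ and $Q = a + bx + cx^{d-1} + y^{d+1}$; both components have degree $d+1$, and the degree-$(d+1)$ homogeneous part of $v$ is $v_{d+1} = xy^{d}\partial_x + y^{d+1}\partial_y = y^{d}(x\partial_x + y\partial_y)$, a multiple of the radial field $R$. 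Hence the top part $\omega_{d+1}$ of the dual $1$-form $\omega = Q\,dx - P\,dy$ satisfies $i_{R}\omega_{d+1} = 0$, so Lemma~\ref{lineinf} gives that $\mathcal{F}$ has degree $d$ and that $l_{\infty} = \{z = 0\}$ is not $\mathcal{F}$-invariant. (Here $\gcd(P,Q) = 1$: the polynomial $P = u + xy^{d}$ is irreducible, being linear in $x$ with $u \neq 0$, and it does not divide $Q$ because $Q(u/y^{d},y)$ is a nonzero rational function of $y$; so $v$ genuinely defines a foliation of degree $d$.)

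Next I would compute the $2$-divisor. Since $\field$ has characteristic two, $v^{2}$ is again a vector field, with components $v^{2}(x) = v(v(x)) = v(P)$ and $v^{2}(y) = v(v(y)) = v(Q)$. Using $d \equiv 1 \pmod 2$ — which kills $Q_y = (d+1)y^{d}$ and the contribution of $cx^{d-1}$ to $Q_x$ — a short computation gives
\[v(P) = y^{d-1}\bigl(uy + ax + bx^{2} + cx^{d}\bigr), \qquad v(Q) = b\,(u + xy^{d}).\]
Then $v \wedge v^{2} = \bigl(P\,v(Q) - Q\,v(P)\bigr)\,\partial_x \wedge \partial_y$, and expanding — using characteristic two once more to cancel repeated monomials — one finds $P\,v(Q) - Q\,v(P)$ equal to
\[\Delta := bu^{2} + auy^{d} + buxy^{d} + a^{2}xy^{d-1} + b^{2}x^{3}y^{d-1} + cux^{d-1}y^{d} + c^{2}x^{2d-1}y^{d-1} + uy^{2d+1} + axy^{2d} + cx^{d}y^{2d}.\]
In particular $v \wedge v^{2} \neq 0$, so $\mathcal{F}$ is not $2$-closed; moreover $\deg \Delta = 3d$ (from the term $cx^{d}y^{2d}$), which is the degree of the $2$-divisor of a degree-$d$ foliation on $\mathbb{P}^{2}_{\field}$, so — consistently with $l_{\infty}$ not being invariant — the $2$-divisor of $\mathcal{F}$ on $\mathbb{A}^{2}_{\field}$ is exactly $\{\Delta = 0\}$.

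It remains to prove $\Delta$ irreducible. Using $d \geq 5$, one checks that the monomials $xy^{d-1}$, $x^{3}y^{d-1}$, $x^{d-1}y^{d}$, $xy^{d}$ and $xy^{2d}$ lie in the interior, so ${\rm Newt}(\Delta)$ is the quadrilateral with vertices $(0,0)$, $(2d-1,d-1)$, $(d,2d)$, $(0,2d+1)$; its boundary lattice points are these four vertices, the points $(0,1),\dots,(0,2d)$ on the left edge, and — crucially, because $d$ is odd — the midpoint $\bigl(\tfrac{3d-1}{2},\tfrac{3d-1}{2}\bigr)$ of the edge from $(2d-1,d-1)$ to $(d,2d)$. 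Reading the boundary edge vectors counterclockwise, $v({\rm Newt}(\Delta))$ is the sequence
\[(2d-1,d-1),\quad \Bigl(\tfrac{1-d}{2},\tfrac{d+1}{2}\Bigr),\quad \Bigl(\tfrac{1-d}{2},\tfrac{d+1}{2}\Bigr),\quad (-d,1),\quad \underbrace{(0,-1),\ \dots,\ (0,-1)}_{2d+1\text{ copies}}.\]
Suppose $v({\rm Newt}(\Delta)) = S_{1} \sqcup S_{2}$ with each $S_{i}$ summing to zero. Looking only at first coordinates, the four vectors $(2d-1,d-1)$, $\bigl(\tfrac{1-d}{2},\tfrac{d+1}{2}\bigr)$ (twice) and $(-d,1)$ must all lie in one of the $S_{i}$, because for $d \geq 5$ no proper nonempty subset of $\{\,2d-1,\ \tfrac{1-d}{2},\ \tfrac{1-d}{2},\ -d\,\}$ sums to zero ($2d-1$ is its only positive element, and a short check shows it is cancelled only by all three negative elements together). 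If they lie in $S_{1}$, their vector sum is $(0,2d+1)$, forcing $S_{1}$ to contain all $2d+1$ copies of $(0,-1)$ as well; so $S_{1} = v({\rm Newt}(\Delta))$ and $S_{2} = \emptyset$. Thus no nontrivial such partition exists, so by Remark~\ref{irreduciblitycriterion} the polytope ${\rm Newt}(\Delta)$ is not a Minkowski sum of two smaller polytopes, and Lemma~\ref{lemma:irreduciblepolynomial} gives that $\Delta$ is irreducible. Hence $\{\Delta = 0\}$ is a prime divisor on $\mathbb{A}^{2}_{\field}$, which completes the proof.

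The step I expect to be the main obstacle is the last one: pinning down ${\rm Newt}(\Delta)$ exactly (which monomials are vertices, and in particular the odd-$d$ lattice midpoint) and carrying out the partition argument. There is also routine but error-prone bookkeeping in obtaining $\Delta$ after the characteristic-two cancellations. Finally, one should record which non-vanishing assumptions on $a,b,c$ are genuinely needed — when some of $a,b,c$ vanish the polytope loses a vertex and one must run the analogous case analysis, as in the two Newton-polytope cases in the proof of Theorem~\ref{thm:claudia2}; in particular $b \neq 0$ and $c \neq 0$, together with $u \neq 0$, are what prevent $\Delta$ from acquiring a spurious power of $y$ or from dropping below degree $3d$.
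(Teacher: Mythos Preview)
Your proof is correct and follows essentially the same approach as the paper: compute $v^{2}$ and the polynomial $\Delta$ cutting out the $2$-divisor, then prove irreducibility via the Newton polytope and the edge-vector partition criterion (Remark~\ref{irreduciblitycriterion} and Lemma~\ref{lemma:irreduciblepolynomial}). Your argument is in fact slightly more careful than the paper's: you correctly note that for $d$ odd the edge from $(2d-1,d-1)$ to $(d,2d)$ has a lattice midpoint and hence contributes two copies of $\bigl(\tfrac{1-d}{2},\tfrac{d+1}{2}\bigr)$ to $v({\rm Newt}(\Delta))$, whereas the paper treats that edge as a single vector $(d-1,-d-1)$; the partition analysis still leads to the same conclusion.
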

\begin{proof} Induction shows that the $2$-divisor is given by $f_{d}(x,y) = f_{d}^{1}(x,y)+f_{d}^{2}(x,y)$ where 
$$
    f_{d}^{1}(x,y) = c^2x^{2d-1}y^{d-1}+cx^{d}y^{2d}+ucx^{d-1}y^{d}+b^{2}x^{3}y^{d-1}+axy^{2d-1}
$$
$$
    f_{d}^{2}(x,y) = ubxy^{d}+a^{2}xy^{d-1}+uy^{2d+1}+uay^{d}+u^2b
$$
We need to show that $f_{d}(x,y) \in \field[x,y]$ is irreducible. First, we determine the Newton polytope of $f_d$:
\begin{align*}{\rm Newt}(f_d)   = & \langle (0,0), (0,d), (0,2d+1), (1,d-1), (1,d), (1,2d-1), (3,d-1), \\ & (d-1,d), (d,2d), (2d-1,d-1)\rangle\end{align*}
The following image shows ${\rm Newt}(f_d)$ for sufficiently big $d$, and its boundary is indicated in red:

\begin{tikzpicture}[scale=0.6]

\def\d{5}

\coordinate (A) at (2*\d-1, \d-1);
\coordinate (B) at (\d, 2*\d);
\coordinate (C) at (\d-1, \d);
\coordinate (D) at (3, \d-1);
\coordinate (E) at (1, 2*\d-1);
\coordinate (F) at (1, \d);
\coordinate (G) at (1, \d-1);
\coordinate (H) at (0, 2*\d+1);
\coordinate (I) at (0, \d);
\coordinate (J) at (0, 0);

\foreach \p in {A, B, C, D, E, F, G, H, I, J} {
    \fill[blue] (\p) circle (2pt);
}

\draw[red, thick] (A) -- (B) -- (H) -- (J) -- cycle;

\node[above right] at (A) {{\tiny $(2d-1, d-1)$}};
\node[above right] at (B) {{\tiny $(d, 2d)$}};
\node[below right] at (C) {{\tiny $(d-1, d)$}};
\node[below right] at (D) {{\tiny $(3, d-1)$}};
\node[above left] at (E) {{\tiny $(1, 2d-1)$}};
\node[below right] at (F) {{\tiny $(1, d)$}};
\node[below right] at (G) {{\tiny $(1, d-1)$}};
\node[above left] at (H) {{\tiny $(0, 2d+1)$}};
\node[below left] at (I) {{\tiny $(0, d)$}};
\node[below left] at (J) {{\tiny $(0, 0)$}};
\end{tikzpicture}

Thus,
\[v({\rm Newt}(f_d)) = \{(0,d), (0,d+1), (d,-1), (d-1,-d-1), (-2d+1,-d+1)\}\]
This sequence cannot be decomposed as the disjoint union of two subsequences summing to zero. Indeed, suppose there exists $S_1$ and $S_2$, subsets of $v({\rm Newt}(f_d))$, whose elements sum to $(0,0)$, such that $v({\rm Newt}(f_d)) = S_1 \cup S_2$. Looking at the first coordinate, if $(-2d+1,-d+1) \in S_1$, then necessarily $(d,-1),(d-1,-d-1) \in S_1$. Their sum is $(0, -2d-1)$. Thus we must have $(0,d),(0,d+1) \in S_1$ also, and we conclude that $v({\rm Newt}(f_d)) = S_1$. Therefore $f_d$ is irreducible by Lemma \ref{lemma:irreduciblepolynomial} and Remark \ref{irreduciblitycriterion}.
\end{proof}

\begin{prop} \label{invIII} The vector field defined by
$$
    v = (ax^{d}y-cy^{2})\partial_{x}+(ax^{2}y^{d-1}+bx)\partial_{y}
$$
with $d \geq 6$, has a prime $2$-divisor on $\mathbb{A}^2_{\field}$, if $abc \neq 0$.
\end{prop}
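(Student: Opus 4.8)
The plan is to follow exactly the strategy of Proposition \ref{invII}: write down the $2$-divisor polynomial explicitly, compute its Newton polytope, and conclude irreducibility via Lemma \ref{lemma:irreduciblepolynomial} and Remark \ref{irreduciblitycriterion}.

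First I would observe that the components $ax^{d}y-cy^{2}=y(ax^{d}-cy)$ and $ax^{2}y^{d-1}+bx=x(axy^{d-1}+b)$ of $v$ have no common factor when $abc\neq 0$, so $v$ is saturated and the $2$-divisor of the foliation it defines, restricted to $\mathbb{A}^{2}_{\field}$, is the zero divisor of $f:=v(x)\,v^{2}(y)-v(y)\,v^{2}(x)$; hence it suffices to show $f\neq 0$ and $f$ irreducible in $\field[x,y]$. Since in characteristic $2$ the composite $v^{2}=v\circ v$ is again a derivation, one has $v^{2}=v(P)\partial_{x}+v(Q)\partial_{y}$ with $P=ax^{d}y-cy^{2}$, $Q=ax^{2}y^{d-1}+bx$, and a direct computation yields $v(P)$ and $v(Q)$; the only subtlety is that $\partial_{x}(ax^{d}y)$ and $\partial_{y}(ax^{2}y^{d-1})$ vanish or not according to the parity of $d$, so the computation splits into the cases $d$ odd and $d$ even. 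Expanding $f$ and using $2=0$ repeatedly, most monomials cancel in pairs, and one is left with a polynomial having exactly six monomials, all with nonzero coefficients because $abc\neq 0$: in the case $d$ odd their exponent vectors are $(0,4)$, $(d,3)$, $(d+2,0)$, $(d+4,2d-2)$, $(d+1,d+2)$, $(2d+1,d+1)$, and in the case $d$ even they are $(0,4)$, $(3,d)$, $(d+2,0)$, $(d+3,d-1)$, $(4,2d-1)$, $(2d,2)$. In particular $f\neq 0$, so the foliation is not $2$-closed.

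Next I would identify ${\rm Newt}(f)$. In both parity cases it is a quadrilateral: its vertices, listed counterclockwise, are $(0,4)$, $(d+2,0)$, $(2d+1,d+1)$, $(d+4,2d-2)$ when $d$ is odd, and $(0,4)$, $(d+2,0)$, $(2d,2)$, $(4,2d-1)$ when $d$ is even, the remaining two exponent vectors lying in the interior. Checking this is an elementary computation with the lines through pairs of vertices, and it is precisely here that the hypothesis $d\geq 6$ enters: for instance, in the odd case the point $(d+1,d+2)$ lies strictly below the edge from $(0,4)$ to $(d+4,2d-2)$ exactly when $d^{2}-6d+2>0$. With the vertices in hand, the edge-vector sequence of ${\rm Newt}(f)$ consists of four vectors, namely $\{(d+2,-4),\,(d-1,d+1),\,(3-d,d-3),\,(-d-4,6-2d)\}$ in the odd case and $\{(d+2,-4),\,(d-2,2),\,(4-2d,2d-3),\,(-4,5-2d)\}$ in the even case (each sums to zero, as it must).

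Finally, to apply Remark \ref{irreduciblitycriterion} I would show that this edge-vector sequence admits no partition into two nonempty zero-sum subsequences. Since there are only four vectors and none of them is zero, the only possibility is a $2+2$ split, and there are just three such splits; one checks directly that in each case the sum of the chosen pair has a nonzero coordinate (for example, in the odd case the three pairwise sums involving $(d+2,-4)$ are $(2d+1,d-3)$, $(5,d-7)$ and $(-2,2-2d)$, all nonzero, and similarly in the even case). Thus $f$ is irreducible by Lemma \ref{lemma:irreduciblepolynomial} and Remark \ref{irreduciblitycriterion}, so the $2$-divisor on $\mathbb{A}^{2}_{\field}$ is the prime curve $\{f=0\}$. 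I expect the main obstacle to be bookkeeping rather than ideas: getting every cancellation in the characteristic-$2$ expansion correct in both parity cases, and verifying carefully that the two ``extra'' monomials are genuinely interior to the quadrilateral for all $d\geq 6$ — in the even case one of them approaches an edge as $d\to\infty$, so the available margin is only of order $1/d$ and the bound $d\geq 6$ is essentially sharp for this argument.
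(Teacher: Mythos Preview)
Your proposal is correct and follows essentially the same approach as the paper: compute the $2$-divisor polynomial in the two parity cases, identify its Newton polytope as a quadrilateral with the remaining two monomials in the interior, and conclude irreducibility via the edge-vector partition criterion of Lemma~\ref{lemma:irreduciblepolynomial} and Remark~\ref{irreduciblitycriterion}. Your treatment is in fact slightly more careful than the paper's --- you pin down exactly where the hypothesis $d\geq 6$ is used (to keep $(d+1,d+2)$, resp.\ $(d+3,d-1)$, strictly interior to the quadrilateral) and you check all three $2+2$ splits directly, whereas the paper's claim that first coordinates alone force $S_1=v({\rm Newt}(f_d))$ is not quite right at $d=6$ even (there $\{4,-d+2\}$ and $\{2d-4,-d-2\}$ both have first-coordinate sum zero, and one needs the second coordinate to rule them out, exactly as you do).
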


\begin{proof} Induction shows that the $2$-divisor is given by $f_{d}(x,y)$ where
\begin{itemize}
    \item if $d\equiv 0 \mod 2$:
        $$
            f_{d}(x,y) = a^{2}bx^{2d}y^{2}+a^{2}bx^{d+3}y^{d-1}+ab^{2}x^{d+2}+a^{2}cx^{4}y^{2d-1}+abcx^{3}y^{d}+bc^{2}y^{4}
        $$
    \item if $d\equiv 1 \mod 2$:
        $$
            f_{d}(x,y) = a^{3}x^{2d+1}y^{d+1}+a^{3}x^{d+4}y^{2d-2}+ab^{2}x^{d+2}+a^{2}cx^{d+1}y^{d+2}+abcx^{d}y^{3}+bc^{2}y^{4}
        $$
\end{itemize}

We need to show that $f_{d}(x,y) \in \field[x,y]$ is irreducible. If $d \equiv 0 \mod{2}$, then
\[{\rm Newt}(f_d) = \langle (2d, 2), (d+3,d-1), (d+2,0), (4,2d-1), (3,d), (0,4)\rangle\]
which is graphically indicated as follows (for $d$ big enough):

\begin{tikzpicture}[scale=0.6]

\def\d{6} 

\coordinate (A) at (2*\d, 2);
\coordinate (B) at (\d+3, \d-1);
\coordinate (C) at (\d+2, 0);
\coordinate (D) at (4, 2*\d-1);
\coordinate (E) at (3, \d);
\coordinate (F) at (0, 4);

\foreach \p in {A, B, C, D, E, F} {
    \fill[blue] (\p) circle (2pt);
}

\draw[red, thick] (F) -- (D) -- (A) -- (C) -- cycle;

\node[above right] at (A) {{\tiny $(2d, 2)$}};
\node[above left] at (B) {{\tiny $(d+3, d-1)$}};
\node[below right] at (C) {{\tiny $(d+2, 0)$}};
\node[above right] at (D) {{\tiny $(4, 2d-1)$}};
\node[above left] at (E) {{\tiny $(3, d)$}};
\node[below left] at (F) {{\tiny $(0, 4)$}};
\end{tikzpicture}

Thus,
\[v({\rm Newt}(f_d)) = \{(4,2d-5), (2d-4,-2d+3), (-d+2,-2), (-d-2,4)\}.\]

Likewise, if $d \equiv 1 \mod{2}$, then 
\[{\rm Newt}(f_d) = \langle (2d+1,d+1), (d+4,2d-2), (d+2,0), (d+1,d+2), (d,3), (0,4)\rangle\]
which is graphically indicated as follows (for $d$ big enough):

\begin{tikzpicture}[scale=0.6]

\def\d{7} 

\coordinate (A) at (2*\d+1, \d+1);
\coordinate (B) at (\d+4, 2*\d-2);
\coordinate (C) at (\d+2, 0);
\coordinate (D) at (\d+1, \d+2);
\coordinate (E) at (\d, 3);
\coordinate (F) at (0, 4);

\foreach \p in {A, B, C, D, E, F} {
    \fill[blue] (\p) circle (2pt);
}

\draw[red, thick] (F) --  (B) -- (A) -- (C) -- cycle;

\node[above right] at (A) {{\tiny $(2d+1, d+1)$}};
\node[above left] at (B) {{\tiny $(d+4, 2d-2)$}};
\node[below right] at (C) {{\tiny $(d+2, 0)$}};
\node[above right] at (D) {{\tiny $(d+1, d+2)$}};
\node[above left] at (E) {{\tiny $(d, 3)$}};
\node[below left] at (F) {{\tiny $(0, 4)$}};
\end{tikzpicture}

Thus,
\[v({\rm Newt}(f_d)) = \{(d+4, 2d-6), (d-3, -d+3), (-d+1,-d-1), (-d-2,4)\}.\]

In both cases, $v({\rm Newt}(f_d))$ cannot be decomposed into two subsequences summing to zero. Indeed, suppose that $v({\rm Newt}(f_d)) = S_1 \cup S_2$, for two subsets $S_1$ and $S_2$ of $v({\rm Newt}(f_d))$ whose elements sum to $(0,0)$. In both cases, looking only to the first coordinates, the only possibility is that all elements belong to either $S_1$ or $S_2$. Therefore, $f_d$ is irreducible by Lemma \ref{lemma:irreduciblepolynomial} and Remark \ref{irreduciblitycriterion}.
\end{proof}

Consider the following foliations on the complex projective plane defined by the vector fields of the previous two Propositions: 
$$
    \mathcal{F}_e(a,b,c)\colon \quad v = (ax^{e}y-cy^{2})\partial_{x}+(ax^{2}y^{e-1}+bx)\partial_{y}
$$  
$$
    \mathcal{G}_d(u,a,b,c)\colon \quad v = (u+xy^{d})\partial_x+(a+bx+cx^{d-1}+y^{d+1})\partial_{y}
$$
with $a,b,c,u \in \mathbb{Z}$ and $abcu \not\equiv 0 \mod 2$. Lemma \ref{lineinf} ensures that $\mathcal{F}_e(a,b,c)$ has degree $e+1$ and $\mathcal{G}_d(u,a,b,c)$ has degree $d$. Moreover the line $l_{\infty}$ is invariant only by the foliation defined by the extension of $\mathcal{F}_e(a,b,c)$  to $\mathbb{P}_{\mathbb{C}}^{2}$.

\begin{thm} \label{thm:examples} On the complex projective plane, the foliation $\mathcal{F}_e(a,b,c)$ has $l_{\infty}$ as the unique algebraic invariant curve and $\mathcal{G}_d(u,a,b,c)$ does not have algebraic invariant curves.
\end{thm}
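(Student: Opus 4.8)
The plan is to treat the two families with the tools of Section~\ref{algtwo}: for $\mathcal{G}_d(u,a,b,c)$ I would verify the four hypotheses of Theorem~\ref{main} and invoke it directly, whereas for $\mathcal{F}_e(a,b,c)$, for which $l_\infty$ is already an invariant curve, I would reproduce by hand the argument used in the proof of Theorem~\ref{main}, exactly as in the proof of Theorem~\ref{claudia}.

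Take first $\mathcal{G}_d(u,a,b,c)$. Hypothesis (1) of Theorem~\ref{main} holds since the vector field has integer coefficients; hypothesis (3) holds because $abcu$ is odd, so those coefficients are units modulo $2$ and the singular scheme of the vector field stays zero-dimensional in characteristic $0$ and in characteristic $2$. For hypothesis (4) I would observe that the top-degree part of the vector field is $y^{d}R$, so $R\wedge v_{d+1}=0$, and the remark after Lemma~\ref{lineinf} then gives that $l_\infty$ is not invariant and that $\mathcal{G}_d$ has degree $d$; hence its $2$-divisor has degree $3d$, which is exactly the total degree of the irreducible polynomial $f_d$ produced in Proposition~\ref{invII}, so the $2$-divisor has no component supported on $l_\infty$ and coincides with the irreducible projective closure of $\{f_d=0\}$. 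Hypothesis (2), non-dicriticality, I would check point by point: the affine singular points all lie on $\{y\neq 0\}$ and one verifies (using $d\not\equiv 0\bmod 2$) that each has a nonzero linear part which is not a constant multiple of the radial $1$-form; passing to a chart at infinity one finds a single singular point on $l_\infty$, again of multiplicity one with non-radial linear part. With (1)--(4) in place, Theorem~\ref{main} yields that $\mathcal{G}_d(u,a,b,c)$ has no algebraic invariant curve.

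For $\mathcal{F}_e(a,b,c)$ I would first record that the top-degree part $v_{e+1}$ satisfies $R\wedge v_{e+1}=ax^{3}y^{e-1}-ax^{e}y^{2}\neq 0$ (as $e\geq 6$ and $a\neq 0$), so by Lemma~\ref{lineinf} the foliation has degree $e+1$ and $l_\infty$ is invariant; consequently the $2$-divisor, of degree $3(e+1)$, splits as $\Delta=k\,l_\infty+C'$ for some $k\geq 1$, where $C'$ is the projective closure of $\{f_e=0\}$, irreducible by Proposition~\ref{invIII}, with $\deg C'>e+3$ (one reads $2e+3$ for $e$ even and $3e+2$ for $e$ odd off the Newton polytopes there), and $l_\infty$ is not a component of $C'$. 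Then, mimicking the proof of Theorem~\ref{claudia}, I would suppose $\mathcal{F}_e$ has an algebraic invariant curve other than $l_\infty$; by Proposition~\ref{coefi} replace it by an irreducible reduced $\mathcal{F}_e$-invariant curve $C$ defined over $\mathbb{Z}$ and contained in $D_{+}(z)$; the Carnicer bound (Theorem~\ref{carnicer}, using non-dicriticality verified as above) gives $\deg C\leq e+3$; good reduction modulo $2$ holds since the singular scheme is finite in any characteristic, so Lemma~\ref{pfactor} shows $C\otimes\mathbb{F}_2$ is not a $2$-factor and hence has an irreducible reduced $\mathcal{F}_e\otimes\overline{\mathbb{F}_2}$-invariant component $Q$. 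Since $\mathcal{F}_e\otimes\overline{\mathbb{F}_2}$ is not $2$-closed, $Q$ must be a component of $\Delta$, and as $Q$ lies in $D_{+}(z)$ it cannot be $l_\infty$, forcing $Q=C'$; but then $e+3<\deg C'=\deg Q\leq\deg C\leq e+3$, a contradiction, so $l_\infty$ is the unique algebraic invariant curve of $\mathcal{F}_e(a,b,c)$.

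The main obstacle I anticipate is the verification of non-dicriticality for the two families: this is a finite but delicate computation, which has to be carried out also at the singular points lying on $l_\infty$, via a chart at infinity and the lowest-order term of the local $1$-form there. A second, minor point — implicit already in the proof of Theorem~\ref{claudia} — is to guarantee that the reduction modulo $2$ of the auxiliary curve $C$ does not acquire $l_\infty$ as a component; this is what legitimizes the step $Q=C'$ above, and it would be handled by choosing a defining equation for $C$ with no $z$-factor whose reduction modulo $2$ also has no $z$-factor. Everything else is a formal consequence of Theorem~\ref{main}, Lemma~\ref{pfactor}, Theorem~\ref{carnicer}, and Propositions~\ref{invII} and~\ref{invIII}, arranged exactly as in the proofs of Theorems~\ref{main} and~\ref{claudia}.
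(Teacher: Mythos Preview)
Your proposal is correct and follows essentially the same route as the paper's proof: check non-dicriticality and good reduction at $2$, invoke Propositions~\ref{invII} and~\ref{invIII} for the irreducibility of the affine $2$-divisor, and then run the degree argument of Theorem~\ref{main} in the affine chart $D_{+}(z)$, concluding via Lemma~\ref{lineinf}. Your treatment is in fact more careful than the paper's in one respect: you explicitly distinguish that for $\mathcal{G}_d$ the projective $2$-divisor is genuinely irreducible (since its affine part already has degree $3d$), so Theorem~\ref{main} applies verbatim, whereas for $\mathcal{F}_e$ the $2$-divisor has $l_\infty$ as a component, so one must rerun the argument of Theorem~\ref{main}/Theorem~\ref{claudia} in $D_{+}(z)$ rather than quote Theorem~\ref{main} literally; the paper simply cites Theorem~\ref{main} for both families, implicitly meaning its proof restricted to $D_+(z)$.
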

\begin{proof} Note that the foliations $\mathcal{F}_e(a,b,c)$ and $\mathcal{G}_d(u,a,b,c)$ are nondicritical. Note that  $\mathcal{F}_e(a,b,c)$ and $\mathcal{G}_d(u, a,b,c)$ have good reduction at $ p =2 $, since $abcu \not\equiv 0 \mod 2$. By Proposition \ref{invII} and Proposition \ref{invIII} we know that the $2$-divisor of those foliations is irreducible on $D_{+}(z) = \mathbb{A}_{\field}^{2}$. Therefore, by Theorem \ref{main}, we conclude that $\mathcal{F}_e(a,b,c)$ and $\mathcal{G}_d(u, a,b,c)$ do not have algebraic invariant curves in $D_{+}(z)$. Lemma \ref{lineinf} finishes the proof.
\end{proof}

\section{Appendix: Code to compute the \texorpdfstring{$p$}{p}-divisor (Singular)} \label{appe}

The following code runs in Singular \cite{DGPS}.  
Given an effective field $K$ (i.e. $K$ is a field for which the operations of sum, subtraction, multiplication and division can be implemented in a computer; see \cite{coutinho2018bounding}) of characteristic $p>0$, and a polynomial vector field 
$$v = A(x,y)\partial_{x}+B(x,y)\partial_{y}$$
where $A(x,y), B(x,y) \in K[x,y]$, the command \verb+pcampo(p,A,B)+ computes the vector field $v^{p}$. The command \verb+pdiv(p,A,B)+ computes the $p$-divisor, that is, the polynomial given by 
$$
   \frac{v\wedge v^{p}}{\partial_x \wedge\partial_y}
$$

\vspace{0.2cm}

\begin{verbatim}
proc pcampo(int p, poly A, poly B){
    int i; 
    matrix AA[p][1]; 
    matrix BB[p][1]; 
    matrix v[2][1]; 
    BB[1,1] = B; 
    AA[1,1] = A; 
    
    for(i=1;i<p;i++){
        AA[i+1,1] = A*diff(AA[i,1],X)+B*diff(AA[i,1],Y);
        BB[i+1,1] = A*diff(BB[i,1],X)+B*diff(BB[i,1],Y);
    }

    v[1,1] = AA[p,1];
    v[2,1] = BB[p,1];
    return (v);
}


proc pdiv(int p, poly A, poly B){
    poly U, V;
    U,V = pcampo(p,A,B);
    return (A*pcampo(p,A,B)[2,1]- B*pcampo(p,A,B)[1,1]);
}
\end{verbatim}

\noindent\textbf{Acknowledgements.}  W. Mendson acknowledges the
support of Capes, CNPq and Universidade Federal Fluminense (UFF). J. P. Figueredo acknowledges the support of the Instituto de Matemática e Estatística (Mathematics and Statistics Institute) of Universidade Federal Fluminense (Fluminense Federal University).

\bibliographystyle{alpha}

\bibliography{annot}

\end{document}